\def\no{\noindent}
\def\pmatrix{\left(\begin{array}}
\def\endpmatrix{\end{array}\right)}
\newtheorem{theo}{Theorem}
\newtheorem{cor}{Corollary}
\newtheorem{rem}{Remark}
\newtheorem{defi}{Definition}
\newtheorem{assum}{Assumption}
\newtheorem{prop}{Proposation}
\newcommand{\abs}[1]{\left\vert#1\right\vert}
\newcommand{\norm}[1]{\left\Vert#1\right\Vert}
\def\sinc{\mathrm{sinc}}
\title{Long-term analysis of symplectic or symmetric extended RKN methods for nonlinear wave equations}
\author{Bin Wang\,
\footnote{Mathematisches Institut, University of T\"{u}bingen, Auf
der Morgenstelle 10, 72076 T\"{u}bingen, Germany; School of
Mathematical Sciences, Qufu Normal University, Qufu 273165, P.R.
China. The research is supported in part by the Alexander von
Humboldt Foundation and by the Natural Science Foundation of
Shandong Province (Outstanding Youth Foundation) under Grant
ZR2017JL003. E-mail:~{\tt wang@na.uni-tuebingen.de} } \and Xinyuan
Wu\thanks{School of Mathematical Sciences, Qufu Normal University,
Qufu 273165, P.R. China; Department of Mathematics, Nanjing
University, Nanjing 210093, P.R. China. The research is supported in
part by the National Natural Science Foundation of China under Grant
11671200. E-mail:~{\tt xywu@nju.edu.cn}} }
\begin{document}
\maketitle

\begin{abstract} This paper analyses the long-time behaviour of  one-stage symplectic
or symmetric extended Runge--Kutta--Nystr\"{o}m (ERKN) methods when
applied to nonlinear wave equations. It is shown that energy,
momentum, and all harmonic actions are approximately preserved over
a long time for one-stage explicit symplectic or symmetric ERKN
methods when applied to nonlinear wave equations via spectral
semi-discretisations. For the long-term analysis of symplectic or
symmetric ERKN methods, we derive a multi-frequency modulated
Fourier expansion of the ERKN method and show three
almost-invariants  of the modulation system. In the analysis of this
paper, we neither assume symmetry for symplectic methods, nor assume
symplecticity for symmetric methods. The results for symplectic and
symmetric methods are obtained as a byproduct of the above analysis.
We also give another proof by establishing a relationship between
symplectic and symmetric ERKN methods and trigonometric integrators
which have been researched for wave equations in the literature.
\medskip

\no{\bf Keywords:}  nonlinear wave equations, extended RKN methods,
multi-frequency modulated Fourier expansion, numerical conservation

\medskip
\no{\bf MSC:}35L70,  65M70,   65M15

\end{abstract}

\section{Introduction}
In this paper, we study the long-time behaviour of extended
Runge--Kutta--Nystr\"{o}m (ERKN) methods when applied to the
one-dimensional non-linear wave equation
\begin{equation}\label{wave equa}
\begin{array}[c]{ll}
u_{tt}-u_{xx}+\rho u+g(u)=0,\ \ \ t>0,\ \ -\pi\leq x\leq \pi,
\end{array}
\end{equation}
where $\rho> 0$ and the non-linearity $g$  is a smooth real function
with $g(0) = g'(0) = 0.$ We consider  periodic boundary conditions
and small initial data  in appropriate Sobolev norms. It is required
that  the initial values $u(\cdot, 0)$ and $u_t (\cdot, 0)$ are
bounded by a small parameter $\epsilon$ (see \cite{Cohen08}). This
kind of equation frequently arises in various fields of scientific
applications and many numerical methods have been developed and
researched for solving the equation (see, e.g.
\cite{Cano06,Cano13,Cano14,Gauckler15,Gauckler17-1,Gauckler18,Grimm06,Liu_Iserles_Wu(2017-2)}).

It is well known that the solution $(u(x, t), v(x, t))$ of
\eqref{wave equa} conserves several important quantities, where $v
=\partial_{t}u.$  The solution of  \eqref{wave equa} exactly
preserves the total energy
\begin{equation}\label{H}
\begin{aligned}
H(u,v)=\frac{1}{ 2 \pi}\int_{-\pi}^{\pi}\Big(
\frac{1}{2}\big(v^2+(\partial_{x}u)^2+\rho
u^2\big)(x)+U(u(x))\Big)dx,
\end{aligned}
\end{equation}
 where the potential $U(u)$ is defined as $U'(u) = g(u)$.
The momentum
\begin{equation}\label{mom}
\begin{aligned}
K(u,v)=\frac{1}{ 2
\pi}\int_{-\pi}^{\pi}\partial_{x}u(x)v(x)dx=-\sum\limits_{k=-\infty}^{\infty}\mathrm{i}ju_{-j}v_{j}
\end{aligned}
\end{equation}
is also conserved exactly by the solution of \eqref{wave equa},
where $u_j = \mathcal{F}_ju$ and $v_j = \mathcal{F}_jv$ are the
Fourier coefficients in the series $u(x)
=\sum\limits_{j=-\infty}^{\infty}u_je^{\mathrm{i}jx}$ and
 $v(x)
=\sum\limits_{j=-\infty}^{\infty}v_je^{\mathrm{i}jx}$, respectively.
It is noted that $u_{-j} = \bar{u}_j$ and $v_{-j} = \bar{v}_{j}$
since we consider only real solutions of \eqref{wave equa}. If we
rewrite the equation \eqref{wave equa} in terms of the Fourier
coefficients, the  equation $
\partial_t^2 u_{j}+\omega_j^2u_{j}+\mathcal{F}_jg(u)=0$ is obtained  with the frequencies $\omega_j=\sqrt{\rho+j^2}$ for $j\in \mathbb{Z}
$. The harmonic actions $ I_j(u,v)=\frac{\omega_j}{ 2}|u_j|^2+
\frac{1}{ 2\omega_j}|v_j|^2 $ are conserved for the linear wave
equation. For the nonlinear equation \eqref{wave equa}, it has been
shown in \cite{Bambusi03,Cohen08} that the actions remain constant
up to small deviations over a long time for almost all values of
$\rho> 0$ when the initial data is smooth and small.

For the methods applied to Hamiltonian systems of ordinary
differential equations (ODEs), the   long-time  near conservation of
the total energy and  actions  can be rigorously studied with the
help of backward error analysis (see \cite{hairer2006}). However,
for highly oscillatory Hamiltonian systems with largest frequency
$\omega$, when the product $h\omega$ is not small, no conclusion on
the long-time behaviour can be drawn by the familiar backward error
analysis. In order to overcome this restriction, Hairer and  Lubich
in \cite{Hairer00} developed a novel technique called as modulated
Fourier expansions for studying long-time conservation properties of
numerical methods for highly oscillatory Hamiltonian systems of
ODEs. It was
 extended to  multi-frequency  systems in
\cite{Cohen05} and has been used in the long-time analysis of
different  methods for various  differential equations (see, e.g.
\cite{Cohen12,Faou15,Faou14,Hairer16,hairer2006,McLachlan14,Sanz-Serna09,17-new}).
This technique    has also been used  in the analysis of wave
equations, and  we refer the reader to
 \cite{Cohen08,Cohen08-1,Gauckler16,Hairer08}.

 On the other hand, the
authors in \cite{wu2010-1}  formulated a  class  of trigonometric
integrators called as
  extended Runge--Kutta--Nystr\"{o}m (ERKN) methods for solving
muti-frequency  highly oscillatory second-order  ODEs. Recently,
further researches of trigonometric integrators on ODEs have been
done and we  refer to
\cite{Blanes2015,Buchholz2018,wang-2016,wang2018-IMA,wu2017-JCAM,wang2017-Cal,wu2013-book}.
 The ERKN methods have also been well developed
for wave equations  on  the basis of  a novel
operator-variation-of-constants formula (see, e.g.
\cite{Liu_Iserles_Wu(2017-2),JCP-Liu,AML(2017)_Liu_Wu,MeiLJ2017a,wubook2015,JCAM(2016)_Wu_Liu}).
To our knowledge, however, the long-time conservation properties of
  ERKN methods when applied to wave equations  have not been considered and  researched yet in the literature.
Under this background, in this paper, we devote to analysing the
long-time behaviour of ERKN methods when solving the nonlinear wave
equation \eqref{wave equa}.

The  main  contributions of this paper are to present  the long term
analysis not only  for symplectic ERKN methods  but also for
symmetric ERKN methods. We will show that both symplecticity and
symmetry can produce a good long-time behaviour for ERKN methods. In
the analysis, we neither assume symmetry for symplectic methods, nor
assume symplecticity for symmetric methods. This is different from
the analysis given in \cite{Cohen08-1,Gauckler16} where the symmetry
plays an important role in the construction of modulated Fourier
expansions and long-term analysis. In this paper, the long-time
behaviour of symplectic  or symmetric ERKN methods will be proved by
the technology of modulated Fourier expansion developed by Hairer
and Lubich  in \cite{Hairer00} with some novel adaptations for  ERKN
discretisation. The behaviour of symplectic and symmetric ERKN
methods can be obtained immediately as a result of the above
analysis or it can  be proved concisely by exploring the connection
between symplectic and symmetric ERKN methods and trigonometric
methods researched in \cite{Cohen08-1}.

The rest of this paper is organised as follows. In Section \ref{sec
Spectral semi-dis}, we consider a full discretisation of the
nonlinear wave equation \eqref{wave equa} by using spectral
semi-discretisation in space and ERKN methods in time. Section
\ref{sec:Main results} presents the long-time conservation
properties of
  ERKN methods and reports some  numerical results to support the
theoretical results. The two main results of symplectic or symmetric
ERKN methods are proved in Sections \ref{sec: symplectic proof}  and
\ref{sec: symmetric proof}, respectively. Section \ref{sec: ss
proof} presents another proof for the results of symplectic and
symmetric ERKN methods.
 Finally, some
concluding remarks are made in Section \ref{sec:conclusions}.

\section{Full discretisation}\label{sec Spectral semi-dis}
In order to  get a full discretisation of  \eqref{wave equa},  we
first discretise the wave equation in space by  a spectral
semi-discretisation and then in time by ERKN methods.

\subsection{Spectral semi-discretisation in space}
Following \cite{Cohen08-1,Hairer08},  the  pseudo-spectral
semi-discretisation  with equidistant collocation points $x_k = k\pi
/M$ (for $k =-M,-M+1,\ldots,M-1$) are chosen in space. Consider the
following real-valued trigonometric polynomials as an approximation
for the solution of  \eqref{wave equa}
\begin{equation}\label{trigo pol}
\begin{array}[c]{ll}
 u^{M}(x,t)=\sum\limits_{|j|\leq M}^{'}
 q_j(t)\mathrm{e}^{\mathrm{i}jx},\quad  v^{M}(x,t)=\sum\limits_{|j|\leq M}^{'}
 p_j(t)\mathrm{e}^{\mathrm{i}jx},
\end{array}
\end{equation}
where  $p_j (t) = \frac{d}{dt}q_j (t)$ and the prime indicates that
the first and last terms in the summation are taken with the factor
$1/2$. It is clear that the $2M$-periodic coefficient vector $q(t) =
(q_j (t))$ is a solution of the $2M$-dimensional system of ODEs
\begin{equation}
\frac{d^2 q}{dt^2}+\Omega^2 q=\tilde{g}(q), \label{prob}%
\end{equation}
where $\Omega$ is diagonal with entries $\omega_j$ for $|j|\leq M,$
and $\tilde{g}(q)=-\mathcal{F}_{2M}g(\mathcal{F}^{-1}_{2M}q).$ Here
$\mathcal{F}_{2M}$ denotes the discrete Fourier transform
$(\mathcal{F}_{2M}w)_j=\frac{1}{2M}\sum\limits_{k=-M}^{M-1}w_k\mathrm{e}^{-\mathrm{i}jx_k}$
 for $|j|\leq M.$ We note that the non-linearity in
\eqref{prob} can be expressed as the form
$\tilde{g}_j(q)=-\frac{\partial}{\partial q_{-j}}V(q)$  with
$V(q)=\frac{1}{2M}\sum\limits_{k=-M}^{M-1}U((\mathcal{F}^{-1}_{2M}q)_k).$
Therefore, the system \eqref{prob} is   a finite-dimensional complex
Hamiltonian system with the energy $$H_M(q,p)=\frac{1}{
2}\sum\limits_{|j|\leq M}^{'}\big(
|p_j|^2+\omega_j^2|q_j|^2\big)+V(q).$$ The actions (for $|j|\leq M$)
and the momentum of \eqref{prob} are respectively given by $$
I_j(q,p)=\frac{\omega_j}{ 2}|q_j|^2+ \frac{1}{ 2\omega_j}|p_j|^2,\ \
K(q,p)=-\sum\limits_{|j|\leq M}''\mathrm{i}jq_{-j}p_{j},$$
 where the double prime indicates that the first and
last terms in the summation are taken with the factor $1/4$. This
paper is interested in real approximation \eqref{trigo pol} and thus
we have
  $q_{-j} = \bar{q}_j$,  $p_{-j} =
\bar{p}_{j}$ and $I_{-j} = I_j$.

 In this paper, we use the same notations as those given in \cite{Cohen08-1}.
  For
$k = (k_l)_{l=0}^{\infty}$ and $\lambda =
(\lambda_l)_{l=0}^{\infty}$,  we denote
\begin{equation}\label{denot}
\begin{aligned}
|k| = (|k_l|)_{l=0}^{\infty},\quad
\norm{k}=\sum\limits_{l=0}^{\infty}|k_l|, \quad \ k\cdot
\lambda=\sum\limits_{l=0}^{\infty}k_l\lambda_l, \quad \
\lambda^{\sigma |k|}=\Pi_{l=0}^{\infty} \lambda_l^{\sigma |k_l|}
\end{aligned}
\end{equation}
for real $\sigma$.  The vector $(0, \ldots , 0, 1, 0, \ldots)$ with
the only entry at the $|j|$-th position for $j\in \mathbb{Z}$ is
denoted by  $\langle j\rangle$.  For $s\in \mathbb{R}^+$, we denote
by $H^{s}$ the Sobolev space of $2M$-periodic sequences $q=(q_j)$
endowed with the weighted norm
$\norm{q}_s=\Big(\sum\limits_{|j|\leq M}''\omega_j^{2s}
|q_j|^2\Big)^{1/2}.$

\subsection{ERKN methods in time}
As the time discretisation, we consider one-stage explicit
  extended Runge--Kutta--Nystr\"{o}m (ERKN) methods, which were
  first developed in \cite{wu2010-1}.

\begin{defi}
\label{erkn}  (See \cite{wu2010-1}) A one-stage  explicit ERKN
method for solving \eqref{prob} is defined by%
 \begin{equation}
\begin{array}
[c]{ll}%
Q^{n+c_{1}} &
=\phi_{0}(c_{1}^{2}V)q^{n}+hc_{1}\phi_{1}(c_{1}^{2}V)p^{n},\\
q^{n+1} & =\phi_{0}(V)q^{n}+h\phi_{1}(V)p^{n}+h^{2}
 \bar{b}_{1}(V)\tilde{g}(Q^{n+c_{1}}),\\
p^{n+1} & =-h\Omega^2\phi_{1}(V)q^{n}+\phi_{0}(V)p^{n}+h\textstyle
b_{1}(V)\tilde{g}(Q^{n+c_{1}}),
\end{array}
  \label{methods}%
\end{equation}
where   $h$ is a stepsize, 
$0\leq c_1\leq1$ , $b_{1}(V)$ and $\bar{b}_{1}(V)$  are
matrix-valued and uniformly bounded functions of $V\equiv
h^{2}\Omega^2$, and $
\phi_{j}(V):=\sum\limits_{k=0}^{\infty}\dfrac{(-1)^{k}V^{k}}{(2k+j)!}
$ for $j=0,1.$
\end{defi}


The following two theorems concerning the symmetry and symplecticity
of ERKN methods will be used in this paper.
\begin{theo}\label{symmetric thm}(Chap. 4  of \cite{wu2013-book})
The one-stage explicit ERKN method \eqref{methods} is symmetric if
and only if
\begin{equation}\begin{aligned}\label{sym cond}c_1=1/2,\ \
\bar{b}_{1}(V)=\phi_{1}(V)b_{1}(V)-\phi_{0}(V)\bar{b}_{1}(V),\ \
\phi_{0}(c_{1}^{2}V)\bar{b}_{1}(V)=c_{1}\phi_{1}(c_{1}^{2}V)b_{1}(V).
\end{aligned}\end{equation}
\end{theo}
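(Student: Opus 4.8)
The plan is to verify the symmetry condition directly from the definition. Recall that a one-step method $\Phi_h : (q^n, p^n) \mapsto (q^{n+1}, p^{n+1})$ is symmetric if and only if $\Phi_h^{-1} = \Phi_{-h}$, equivalently $\Phi_{-h} \circ \Phi_h = \mathrm{id}$. So I would write down the scheme \eqref{methods} with stepsize $-h$ and initial data $(q^{n+1}, p^{n+1})$, producing intermediate values and a new pair, and demand that this new pair equals $(q^n, p^n)$ for all $(q^n, p^n)$. Since $V = h^2\Omega^2$ and $c_1^2 V$ are even in $h$, the functions $\phi_0, \phi_1, b_1, \bar b_1$ of $V$ are unchanged when $h \mapsto -h$, while the factors of $h$ appearing linearly flip sign; this is the structural reason the three algebraic conditions in \eqref{sym cond} emerge.

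Concretely, first I would treat the linear (nonlinearity-free) part. Composing the $q$- and $p$-updates for $h$ and then $-h$ and using the well-known identities $\phi_0(V)^2 + V\phi_1(V)^2 = I$ and $2\phi_0(V)\phi_1(V) = $ (the analogue for the doubled argument) — these are just the matrix cosine/sine addition formulas, since $\phi_0(V) = \cos(h\Omega)$ and $\phi_1(V) = \mathrm{sinc}(h\Omega)$ — one checks the linear part composes to the identity automatically, with no condition needed. Then I would collect the terms involving $\tilde g$. The internal stage of the backward step is $Q = \phi_0(c_1^2 V) q^{n+1} - h c_1 \phi_1(c_1^2 V) p^{n+1}$; substituting the forward formulas for $q^{n+1}, p^{n+1}$ and using the same trigonometric identities, the coefficient of the old stage value $\tilde g(Q^{n+c_1})$ must vanish in the combined $Q$, which forces the third condition $\phi_0(c_1^2 V)\bar b_1(V) = c_1 \phi_1(c_1^2 V) b_1(V)$ and, together with $c_1$-symmetry of the abscissa (hence $c_1 = 1/2$), makes the two stage values coincide. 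Finally, demanding that the $\tilde g$-contributions to the returned $q$ and $p$ cancel between the forward and backward half-steps yields the middle relation $\bar b_1(V) = \phi_1(V) b_1(V) - \phi_0(V)\bar b_1(V)$.

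For the converse, I would simply run the same computation in reverse: assuming the three conditions \eqref{sym cond}, substitute and use the identities to see that every $\tilde g$-term cancels and the linear part gives the identity, so $\Phi_{-h}\circ\Phi_h = \mathrm{id}$. The main obstacle is bookkeeping: one must be careful that the single stage $Q^{n+c_1}$ is shared, so the forward step's stage and the backward step's stage are genuinely the same point — this is where $c_1 = 1/2$ is essential (so that $c_1$ and $1 - c_1$ coincide and the backward stage lands at the same interior node), and getting the argument of each $\phi_j$ to match under $h \mapsto -h$ requires tracking the even/odd dependence on $h$ carefully. There is no deep difficulty beyond this; the result is essentially an exercise in composing the explicit formulas and invoking the cosine/sine addition theorems for the matrix functions $\phi_0, \phi_1$.
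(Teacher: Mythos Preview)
Your approach is sound and is the standard direct verification: write down the adjoint method $\Phi_h^* = \Phi_{-h}^{-1}$ by swapping $(q^n,p^n)\leftrightarrow(q^{n+1},p^{n+1})$ and $h\mapsto -h$, then match coefficients with the original scheme. The computation you outline is correct; in particular, the addition formulas for $\phi_0,\phi_1$ turn the linear part of the adjoint stage into $\phi_0((1-c_1)^2V)q^n + h(1-c_1)\phi_1((1-c_1)^2V)p^n$, forcing $c_1=1/2$, and the vanishing of the $\tilde g$-term in the stage gives the third relation. The $q$-update comparison gives the middle relation, and the $p$-update condition $b_1 = V\phi_1\bar b_1 + \phi_0 b_1$ is then automatic from $1-\phi_0^2 = V\phi_1^2$.

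Note, however, that the paper does not supply its own proof of this theorem: it is quoted verbatim from Chapter~4 of \cite{wu2013-book} and used as a black box. So there is nothing to compare your argument against within this paper; your write-up would stand as an independent (and correct) proof of the cited result.
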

\begin{theo}\label{symplectic thm} (Chap. 4 of \cite{wu2013-book})
If there exists a real number $d_1\in \mathbb{R}$ such that
\begin{equation}\begin{aligned}\label{symple cond}
&\phi_{0}(V)b_{1}(V)+V\phi_{1}(V)\bar{b}_{1}(V)=d_{1}\phi_{0}(c_{1}^{2}V),\
 \phi_{1}(V)b_{1}(V)-\phi_{0}(V)\bar{b}_{1}(V)=c_{1}d_{1}\phi_{1}(c_{1}^{2}V),
\end{aligned}\end{equation}
then the one-stage explicit ERKN method \eqref{methods} is
symplectic.
\end{theo}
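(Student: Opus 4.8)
The plan is to verify directly that the one-step map $\Phi_h:(q^n,p^n)\mapsto(q^{n+1},p^{n+1})$ defined by \eqref{methods} has a symplectic Jacobian, i.e. $(\Phi_h')^{\top}J\,\Phi_h'=J$, where $J$ is the canonical structure matrix ($J^{\top}=-J=J^{-1}$). Writing the Jacobian in $2\times2$ block form, $\dd q^{n+1}=A\,\dd q^n+B\,\dd p^n$ and $\dd p^{n+1}=C\,\dd q^n+D\,\dd p^n$, this is equivalent to $A^{\top}C$ and $B^{\top}D$ being symmetric together with $A^{\top}D-C^{\top}B=I$. Differentiating \eqref{methods} and abbreviating $G:=\tilde{g}'(Q^{n+c_1})$, which is \emph{symmetric} because $\tilde{g}=-\nabla V$, one has $\dd Q^{n+c_1}=\phi_0(c_1^2V)\,\dd q^n+hc_1\phi_1(c_1^2V)\,\dd p^n$ and hence
\[
A=\phi_0(V)+h^2\,\bar{b}_1(V)\,G\,\phi_0(c_1^2V),\qquad B=h\phi_1(V)+h^3c_1\,\bar{b}_1(V)\,G\,\phi_1(c_1^2V),
\]
\[
C=-h\Omega^2\phi_1(V)+h\,b_1(V)\,G\,\phi_0(c_1^2V),\qquad D=\phi_0(V)+h^2c_1\,b_1(V)\,G\,\phi_1(c_1^2V).
\]
Thus $A,B,C,D$ are assembled from $G$ and from the \emph{diagonal} — hence mutually commuting and symmetric — matrices $\phi_0(V)$, $\phi_1(V)$, $\Omega$, $b_1(V)$, $\bar{b}_1(V)$, $\phi_0(c_1^2V)$, $\phi_1(c_1^2V)$.

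The verification rests on three elementary facts: (i) $G^{\top}=G$, which makes transposes such as $\bigl(\bar{b}_1(V)\,G\,\phi_0(c_1^2V)\bigr)^{\top}=\phi_0(c_1^2V)\,G\,\bar{b}_1(V)$ immediate; (ii) commutativity of all the diagonal factors, so that in each of the three identities the terms quadratic in $G$ cancel pairwise (using $b_1(V)\bar{b}_1(V)=\bar{b}_1(V)b_1(V)$); and (iii) the classical identity $\phi_0(V)^2+V\phi_1(V)^2=I$, valid since $\phi_0(V)=\cos(h\Omega)$ and $\sqrt{V}\,\phi_1(V)=\sin(h\Omega)$. Expanding $A^{\top}C-C^{\top}A$, the purely diagonal terms and the $G$-quadratic terms cancel by (ii)--(iii), and the remaining cross terms vanish exactly when $\phi_0(V)b_1(V)+V\phi_1(V)\bar{b}_1(V)$ equals a scalar multiple $d_1\,\phi_0(c_1^2V)$ of $\phi_0(c_1^2V)$ — i.e. under the first relation of \eqref{symple cond}. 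Likewise $B^{\top}D-D^{\top}B=0$ reduces to $\phi_1(V)b_1(V)-\phi_0(V)\bar{b}_1(V)=c_1d_1\phi_1(c_1^2V)$, the second relation of \eqref{symple cond}. Finally, in $A^{\top}D-C^{\top}B$ the purely diagonal part is precisely $\phi_0(V)^2+V\phi_1(V)^2=I$, the $G$-quadratic part cancels, and the cross terms — after inserting $V=h^2\Omega^2$ and both relations of \eqref{symple cond} — reduce to $h^2c_1d_1\,\phi_0(c_1^2V)\,G\,\phi_1(c_1^2V)$ appearing with opposite signs, hence to $0$; so $A^{\top}D-C^{\top}B=I$ and $\Phi_h$ is symplectic.

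The only genuine difficulty is bookkeeping: since $G$ is not diagonal it fails to commute with $b_1(V)$, $\bar{b}_1(V)$ and $\phi_j(c_1^2V)$, so the cancellations must be tracked entrywise (equivalently, by commuting each diagonal factor past $G$ in the correct order), and one must recall that in the complex Fourier variables the Jacobian of $\tilde{g}$ is symmetric only with respect to the twisted pairing $\langle\,\cdot\,,P\,\cdot\,\rangle$ attached to the Hamiltonian $H_M$, with $P$ the index flip $j\mapsto-j$; since $\omega_{-j}=\omega_j$, $P$ commutes with every diagonal function of $\Omega$, so carrying $P$ through the computation — or simply working in the real formulation of \eqref{prob} — causes no further complication. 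Modulo this, the proof is a direct substitution, and each of the two relations in \eqref{symple cond} is consumed exactly once across the three symplecticity identities; this recovers the statement of Chap. 4 of \cite{wu2013-book}.
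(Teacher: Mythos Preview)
The paper does not prove this theorem; it is quoted verbatim from Chap.~4 of \cite{wu2013-book} and used as a black box, so there is no ``paper's own proof'' to compare against. Your argument is the standard direct verification of $(\Phi_h')^{\top}J\,\Phi_h'=J$ and it is correct: the blocks $A,B,C,D$ are computed correctly from \eqref{methods}, the purely diagonal part of $A^{\top}D-C^{\top}B$ is exactly $\phi_0(V)^2+V\phi_1(V)^2=I$, the $G$-quadratic parts cancel by commutativity of the diagonal factors $b_1(V)$ and $\bar b_1(V)$, and the $G$-linear cross terms collapse under \eqref{symple cond} precisely as you describe. Your handling of the complex Fourier coordinates is also right: with $\tilde g_j=-\partial V/\partial q_{-j}$ one has $(PG)^{\top}=PG$ for the index flip $P$, and since $\omega_{-j}=\omega_j$ every diagonal function of $\Omega$ commutes with $P$, so carrying $P$ through (or passing to the real formulation) leaves the computation unchanged.

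One small inaccuracy: your closing sentence that ``each of the two relations in \eqref{symple cond} is consumed exactly once across the three symplecticity identities'' understates things. The first relation is used in showing $A^{\top}C$ symmetric \emph{and} again in $A^{\top}D-C^{\top}B=I$; likewise the second relation is used both for $B^{\top}D$ symmetric and for $A^{\top}D-C^{\top}B=I$. This does not affect the validity of the proof, only the bookkeeping remark.
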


It is noted that for $V=h^{2}\Omega^2$, one has $
\phi_{0}(V)=\cos(h\Omega)$ and $
\phi_{1}(V)=\textmd{sinc}(h\Omega):=\sin(h\Omega)(h\Omega)^{-1}.$
Hence, in the remainder of this paper, we use the  notations
$\bar{b}_{1}(h\Omega)$ and $ b_{1}(h\Omega)$ to denote  the
coefficients appearing in the ERKN method \eqref{methods}.

As some examples, we consider four ERKN methods whose coefficients
are displayed in Table \ref{praERKN}. According to Theorems
\ref{symmetric thm} and \ref{symplectic thm}, we know that ERKN1 is
neither symmetric nor symplectic,   ERKN2 is symplectic but not
symmetric, ERKN3 is symmetric but not symplectic, and ERKN4  is
symmetric and symplectic. By the order conditions of ERKN methods
given in \cite{wu2013-book}, it can be checked that all these four
methods are of order two.

\renewcommand\arraystretch{1.2}
\begin{table}[!htb]$$
\begin{array}{|c|c|c|c|c|c|c|c|}
\hline
\text{Methods} &c_1  &\bar{b}_1(h\Omega)   &b_1(h\Omega)  &\text{Order}& \text{Symmetric}  &  \text{Symplectic}  \\
\hline
 \text{ERKN1} & \frac{1}{2}  & \frac{1}{2} \textmd{sinc}^2(h\Omega/2)&
 \cos(h\Omega/2) & 2
& \text{No} & \text{No} \cr
\text{ERKN2} & \frac{1}{5} & \frac{4}{5} \textmd{sinc}(4h\Omega/5)  &\cos(4h\Omega/5)  & 2 & \text{No} & \text{Yes} \cr
\text{ERKN3} & \frac{1}{2} & \frac{1}{2} \textmd{sinc}^2(h\Omega/2) & \textmd{sinc}(h\Omega/2) \cos(h\Omega/2)    & 2 & \text{Yes} & \text{No} \cr
\text{ERKN4} & \frac{1}{2} & \frac{1}{2}\textmd{sinc}(h\Omega/2) & \cos(h\Omega/2)   & 2 & \text{Yes} & \text{Yes} \cr
 \hline
\end{array}
$$
\caption{Four one-stage explicit ERKN methods.} \label{praERKN}
\end{table}

\section{Main results and numerical experiment} \label{sec:Main results}

 \subsection{Main results}

In our analysis, we   make the following assumptions, where the
first four assumptions   have been considered in \cite{Cohen08-1}.
\begin{assum}\label{ass}
 $\bullet$
The initial values $q(0)$ and $p(0)$ of \eqref{prob} are assumed to
satisfy
\begin{equation}\label{initi cond}
\big(\norm{q(0)}_{s+1}^2+\norm{p(0)}_{s}^2\big)^{1/2}\leq \epsilon
\end{equation}
with a small parameter $\epsilon$.

 $\bullet$ For a given stepsize $h$, we consider the non-resonance condition
\begin{equation}
|\sin(\frac{h}{2}(\omega_j-k\cdot \omega))\cdot
\sin(\frac{h}{2}(\omega_j+k\cdot \omega))| \geq \epsilon^{1/2}h^2(
 \omega_j+|k \cdot \omega|).
 \label{inequa}%
\end{equation}
 If this condition is violated, we   define a  set of
near-resonant indices
\begin{equation}
\mathcal{R}_{\epsilon,h}=\{(j, k):|j|\leq M,\ \norm{k}\leq2N,\ \
k\neq\pm\langle j\rangle,\ \textmd{not}\ \textmd{satisfying} \
\eqref{inequa}\},
 \label{near-resonant R}%
\end{equation}
where   $N \geq 1$ is    the truncation number of the expansion
\eqref{MFE-ERKN} which will be presented in the next section.

 $\bullet$
 There exist  $\sigma
> 0$ and a constant $C_0$ such that the following non-resonance
condition is true
\begin{equation}
\sup_{(j, k)\in\mathcal{R}_{\epsilon,h}}
\frac{\omega_j^{\sigma}}{\omega^{\sigma
|k|}}\epsilon^{\norm{k}/2}\leq C_0 \epsilon^N.
 \label{non-resonance cond}%
\end{equation}

 $\bullet$
The following numerical non-resonance condition is assumed further
to be true
\begin{equation}
|\sin(h\omega_j)|\geq h \epsilon^{1/2}\ \ for \ \ |j|\leq M.
 \label{further-non-res cond}
\end{equation}

 $\bullet$ For a positive constant $c > 0$, another non-resonance condition is considered
\begin{equation}
\begin{aligned}
&|\sin(\frac{h}{2}(\omega_j-k\cdot \omega))\cdot
\sin(\frac{h}{2}(\omega_j+k\cdot \omega))| \geq c h^2 |\sinc
(h\omega_j)|\\
&\textmd{for}\ (j, k) \ \textmd{of the form}\ j=j_1+j_2\
\textmd{and}\ k=\pm\langle j_1\rangle\pm\langle j_2\rangle,
 \end{aligned}
 \label{another-non-res cond}%
\end{equation}
 which leads to improved
conservation estimates.

\end{assum}

 The main results of this paper are stated by the following two theorems.
\begin{theo}\label{main theo1} Suppose that the conditions  of Assumptions \ref{ass}  with
$s \geq \sigma + 1$ and the symplecticity conditions \eqref{symple
cond} are true (the symmetry conditions \eqref{sym cond} are not
required), the symplectic (not necessarily symmetric) ERKN method
\eqref{methods} has the near-conservation estimates
\begin{equation}
\begin{aligned}
&\sum\limits_{l=0}^{M}\omega_l^{2s+1}\frac{|I_l(q^n,p^n)-I_l(q^0,p^0)|}{\epsilon^2}
\leq
C  \epsilon,\\
&\frac{|K(q^n,p^n)-K(q^0,p^0)|}{\epsilon^2} \leq C
(\epsilon+M^{-s}+\epsilon t M^{-s+1}),\\
& \frac{|H_M(q^n,p^n)-H_M(q^0,p^0)|}{\epsilon^2} \leq C \epsilon
\end{aligned}
\label{near-conser res}%
\end{equation}
 for  actions, momentum and  energy, respectively, where    $0\leq t=nh\leq
\epsilon^{-N+1}$. The constant $C$  depends on $s, N,$ and $C_0$,
but is independent of   $\epsilon, M, h$ and the time $t=nh.$ The
bound $C \epsilon$ is weakened to $C \epsilon^{1/2}$ if condition
\eqref{another-non-res cond} is not satisfied.
\end{theo}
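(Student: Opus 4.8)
The plan is to construct a multi-frequency modulated Fourier expansion (MFE) of the numerical solution in the spirit of Hairer--Lubich \cite{Hairer00} and Cohen--Hairer--Lubich \cite{Cohen08-1}, adapted to the ERKN recursion \eqref{methods}, and then to read off three almost-invariants of the associated modulation system. First I would write the numerical approximation at $t=nh$ in the form $\tilde q_j(t)=\sum_{\norm{k}\le 2N} z_j^k(t)\,\ee^{\ii(k\cdot\omega)t}$, with an analogous expansion for $\tilde p_j$ built from the functions $\phi_0(h\Omega)=\cos(h\Omega)$ and $\phi_1(h\Omega)=\sinc(h\Omega)$, insert this ansatz into \eqref{methods}, Taylor-expand $z_j^k(t\pm h)$ and $z_j^k(t+c_1h)$ about $t$, and expand the nonlinearity $\tilde g(Q^{n+c_1})$ as a power series in the $z$'s. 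Collecting the coefficient of each $\ee^{\ii(k\cdot\omega)t}$ produces the modulation system. The factor multiplying $z_j^k$ on the left-hand side is, up to harmless constants, $\sin(\tfrac h2(\omega_j-k\cdot\omega))\sin(\tfrac h2(\omega_j+k\cdot\omega))$, which vanishes precisely for $k=\pm\langle j\rangle$; for those diagonal indices one obtains instead a differential equation for the slow drift of $z_j^{\pm\langle j\rangle}$, whereas for every other $(j,k)$ the equation can be solved algebraically for $z_j^k$ up to higher-order corrections.

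The next step is to solve the modulation system iteratively in powers of $\epsilon$, establishing bounds of the type $\norm{z^{\langle j\rangle}}$ of order $\epsilon$ and, more generally, $\norm{z^k}$ of order $\epsilon^{\norm{k}}$ times controlled powers of the frequencies, where the small divisors $\sin(\tfrac h2(\omega_j\mp k\cdot\omega))$ are dominated with the help of the non-resonance conditions \eqref{inequa}--\eqref{non-resonance cond} and the numerical non-resonance condition \eqref{further-non-res cond} (needed because $h\omega_j$ need not be small for large $j$). Truncating at $\norm{k}\le 2N$ and after $N$ iteration steps, one verifies that the truncated functions satisfy \eqref{methods} up to a defect of size $\mathcal{O}(h\epsilon^{N+1})$ in the weighted Sobolev norm $\norm{\cdot}_s$, and hence, by a discrete Gronwall argument on a time interval of length $\mathcal{O}(1)$, that the MFE reproduces $(q^n,p^n)$ to within $\mathcal{O}(\epsilon^{N})$ there. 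Condition \eqref{another-non-res cond} is used to sharpen the estimates for the critical near-resonant indices $j=j_1+j_2$, $k=\pm\langle j_1\rangle\pm\langle j_2\rangle$; without it one only obtains the weaker bound $\mathcal{O}(\epsilon^{1/2})$ in place of $\mathcal{O}(\epsilon)$, which explains the last sentence of the statement.

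The heart of the proof is to exhibit three almost-invariants of the modulation system and to match them with $I_l$, $K$ and $H_M$. The action almost-invariants $\mathcal{I}_l[z]$ arise from the invariance of the modulation equations under the phase rotations $z_j^k\mapsto\ee^{\ii k_l\theta}z_j^k$, which holds because the nonlinearity derives from the potential $V$; a Noether-type computation gives $\tfrac{d}{dt}\mathcal{I}_l[z]=\mathcal{O}(\epsilon^{N+1})$. The momentum almost-invariant $\mathcal{K}[z]$ comes similarly from the (approximate, up to spectral error) invariance under the global shift $z_j^k\mapsto\ee^{\ii j\theta}z_j^k$. The energy almost-invariant $\mathcal{H}[z]$ comes from the near time-translation invariance of the modulation system; this is where the symplecticity conditions \eqref{symple cond} enter, forcing the non-gradient part of the modulation equations to be of higher order, so that the system is, up to $\mathcal{O}(\epsilon^{N+1})$, of Hamiltonian type in the modulation variables and therefore carries a conserved energy. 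One then checks that along the numerical solution $\mathcal{I}_l[z](nh)=I_l(q^n,p^n)+\mathcal{O}(\epsilon^3)$, $\mathcal{H}[z](nh)=H_M(q^n,p^n)+\mathcal{O}(\epsilon^3)$, and $\mathcal{K}[z](nh)=K(q^n,p^n)+\mathcal{O}(\epsilon^3)$ up to the spectral truncation terms, using that the diagonal contributions $z_j^{\pm\langle j\rangle}$ dominate.

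Finally, because each almost-invariant changes by $\mathcal{O}(h\epsilon^{N+1})$ over one step and the MFE can be reconstructed from the data at any time $nh$ with uniform constants, I would patch $\mathcal{O}(\epsilon^{-N+1}/h)$ short intervals together to bound $|\mathcal{I}_l(nh)-\mathcal{I}_l(0)|$, and likewise for $\mathcal{K}$ and $\mathcal{H}$, by the stated multiples of $\epsilon^2$ (the extra $M^{-s}+\epsilon t M^{-s+1}$ for the momentum coming from the spectral semi-discretisation of $K$), which, translated back to $I_l$, $K$ and $H_M$, yields \eqref{near-conser res}. \textbf{The main obstacle is the energy almost-invariant in the symplectic-only case:} the standard route, a Lagrangian or variational structure of the modulation system, relies on symmetry and is unavailable here, so one must extract a modified-Hamiltonian-type invariant directly from \eqref{symple cond} while keeping the discrepancy from an exact gradient system uniformly $\mathcal{O}(\epsilon^{N+1})$ across the arbitrarily large frequencies; securing this uniformity against $h\omega_j=\mathcal{O}(1)$ is exactly what the conditions \eqref{further-non-res cond} and \eqref{another-non-res cond} are designed to achieve.
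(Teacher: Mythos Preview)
Your outline is essentially the paper's proof: construct a multi-frequency modulated Fourier expansion for the ERKN recursion, bound the modulation functions via a reverse Picard iteration controlled by \eqref{inequa}--\eqref{further-non-res cond}, estimate the defect to $\mathcal{O}(\epsilon^{N+1})$, exhibit three almost-invariants from the $S_\mu$- and $T$-symmetries of the extended potential and from differentiating $\mathcal{U}$ in $\tau$, match them with $I_l$, $K$, $H_M$, and patch intervals of length $\epsilon^{-1}$.

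The one point where your emphasis diverges from the paper is the role of symplecticity. You locate it in the energy almost-invariant, framing the difficulty as the absence of a variational/Lagrangian structure without symmetry. In the paper symplecticity enters earlier and more pervasively: it is used to compute the Taylor expansion of the composite operator $L^k$ (Proposition~\ref{lhd pro}). Under \eqref{symple cond} the leading coefficient of $L^{\pm\langle j\rangle}$ comes out as $\pm 2\ii\epsilon h^2\omega_j$, whereas under symmetry alone one gets $\pm 2\ii\epsilon h\sin(h\omega_j/2)/\bar b_1(h\omega_j)$ (compare \eqref{L expansion} with \eqref{Lll expansion}). The ``magic formulas'' that make $\zeta_{-j}^{-k}\tilde L_j^k\zeta_j^k$ and $(\ii(k\cdot\omega)\zeta_{-j}^{-k}+\epsilon\dot\zeta_{-j}^{-k})\tilde L_j^k\zeta_j^k$ total derivatives work in \emph{both} cases, so all three almost-invariants exist either way; what symplecticity buys is that the resulting $\mathcal{J}_l,\mathcal{K},\mathcal{H}$ match the \emph{unmodified} $I_l,K,H_M$ rather than the $\sigma(h\omega_j)$-weighted versions \eqref{MMMM}. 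So the obstacle you flag is not really the existence of an energy invariant but the identification step (your Theorem~\ref{Relationship thm} analogue), and it is resolved by the algebra of Proposition~\ref{lhd pro} rather than by a modified-Hamiltonian argument.
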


\begin{theo}\label{main theo2}Define the following modified
actions, modified momentum and modified energy, respectively
\begin{equation}
\begin{aligned}
&\hat{I}_j(q,p)=\sigma(h \omega_j)I_j(q,p)\qquad \quad \textmd{for}\  |j|\leq M,\\
&\hat{K}(q,p)=-\sum\limits_{|j|\leq M}''\mathrm{i}j\sigma(h \omega_j)q_{-j}p_{j},\\
& \hat{H}_M(q,p)=\frac{1}{ 2}\sum\limits_{|j|\leq M}^{'}\sigma(h
\omega_j)\big( |p_j|^2+\omega_j^2|q_j|^2\big)+V(q)
\end{aligned}
\label{MMMM}%
\end{equation}
with  $\sigma(h \omega_j)= \frac{1}{2}\textmd{sinc}\big(\frac{1}{2}h
\omega_j\big)/\bar{b}_1(h \omega_j).$ Under   the conditions  of
Assumptions \ref{ass}  with $s \geq \sigma + 1$ and the symmetry
conditions \eqref{sym cond} (the symplecticity conditions
\eqref{symple cond} are not required),  the symmetric (not
necessarily symplectic) ERKN method \eqref{methods}  has the
near-conservation estimates
\begin{equation}
\begin{aligned}
&\sum\limits_{l=0}^{M}\omega_l^{2s+1}\frac{|\hat{I}_l(q^n,p^n)-\hat{I}_l(q^0,p^0)|}{\epsilon^2}
\leq
C  \epsilon,\\
&\frac{|\hat{K}(q^n,p^n)-\hat{K}(q^0,p^0)|}{\epsilon^2} \leq C
(\epsilon+M^{-s}+\epsilon t M^{-s+1}),\\
& \frac{|\hat{H}_M(q^n,p^n)-\hat{H}_M(q^0,p^0)|}{\epsilon^2} \leq C
\epsilon.
\end{aligned}
\label{near-modi-conser res}%
\end{equation}
   If condition \eqref{another-non-res cond} is not satisfied, the
bound $C \epsilon$ is weakened to $C \epsilon^{1/2}$.
\end{theo}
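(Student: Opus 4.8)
The plan is to follow the same multi-frequency modulated Fourier expansion (MFE) strategy as in the proof of Theorem~\ref{main theo1}, with the symmetry conditions \eqref{sym cond} taking over the structural role played there by the symplecticity conditions \eqref{symple cond}. After rescaling so that the initial data in \eqref{initi cond} is of size $O(1)$ and the nonlinearity carries the small factor $\epsilon$, I would seek an expansion of the ERKN solution of \eqref{methods} of the form
\[
q^n \;\approx\; \sum_{\|k\|\le N} z^{k}(t)\,\mathrm{e}^{\mathrm{i}(k\cdot\omega)nh},\qquad t=nh,
\]
with smooth modulation functions $z^{k}=(z^{k}_j)_{|j|\le M}$ whose derivatives are controlled in the weighted spaces $H^{s+1}$ and $H^{s}$. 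Substituting this ansatz into the three relations of \eqref{methods} — including the internal stage $Q^{n+c_1}$ with $c_1=1/2$ — Taylor-expanding the filter functions $\phi_0(V)=\cos(h\Omega)$, $\phi_1(V)=\sinc(h\Omega)$, $b_1(h\Omega)$ and $\bar b_1(h\Omega)$ about the shifted frequencies $h(k\cdot\omega)$, and comparing the coefficients of each $\mathrm{e}^{\mathrm{i}(k\cdot\omega)nh}$ produces the \emph{modulation system}: a hierarchy of equations for the $z^{k}$ in which the decisive ``small divisor'' multiplying $z^{k}_j$ is precisely the product $\sin(\tfrac h2(\omega_j-k\cdot\omega))\,\sin(\tfrac h2(\omega_j+k\cdot\omega))$ that appears in \eqref{inequa}.

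I would then solve this hierarchy iteratively in powers of $\epsilon$: the leading functions $z^{\pm\langle j\rangle}_j$ obey a near-differential equation coming from the $\|k\|=1$ relations, while all remaining $z^{k}_j$ are obtained algebraically. The non-resonance condition \eqref{inequa}, the definition of $\mathcal R_{\epsilon,h}$ in \eqref{near-resonant R}, and the conditions \eqref{non-resonance cond}, \eqref{further-non-res cond} guarantee that the small divisors are not too small, so that the $z^{k}$ decay geometrically in $\|k\|$ and admit the required $\epsilon$-dependent bounds in the weighted norms. A defect/Gronwall argument then shows that $q^n$ coincides with its truncated MFE up to a remainder that is a high power of $\epsilon$ on every time interval of length $O(1)$, and that such an MFE is essentially unique.

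The heart of the proof is to exhibit three formal almost-invariants $\mathcal H[\mathbf z]$, $\mathcal K[\mathbf z]$, $\mathcal I_j[\mathbf z]$ of the modulation system using \emph{only} the symmetry relations \eqref{sym cond}. For the modulation momentum and the modulation actions I would use a discrete Noether-type argument: the modulation equations are invariant under the phase rotations $z^{k}_j\mapsto\mathrm{e}^{\mathrm{i}j\theta}z^{k}_j$ and $z^{k}\mapsto\mathrm{e}^{\mathrm{i}k_l\theta}z^{k}$, and these symmetries hold regardless of symplecticity. For the energy-type invariant $\mathcal H$ the decisive input is the two coupling identities in \eqref{sym cond}, which play here the role that the Hamiltonian structure plays in the proof of Theorem~\ref{main theo1} and allow the formally conserved combination along the modulation system to be identified; one then checks that $\mathcal H$, $\mathcal K$, $\mathcal I_j$ drift by only a high power of $\epsilon$ per $O(1)$-interval. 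Translating these modulation invariants back into functions of $(q^n,p^n)$ is what produces the weights in \eqref{MMMM}: the passage from the dominant coefficients $z^{\pm\langle j\rangle}_j$ to the ERKN variables involves $\bar b_1(h\Omega)$ together with $\phi_0(V)$ and $\phi_1(V)$, which generates precisely the factor $\sigma(h\omega_j)=\tfrac12\sinc(\tfrac12 h\omega_j)/\bar b_1(h\omega_j)$ (equal to $1$ for ERKN4), so that $\hat I_j$, $\hat K$, $\hat H_M$ agree with the modulation invariants up to $O(\epsilon^{3})$. Patching $O(\epsilon^{-N+1})$ consecutive intervals and summing the drifts yields \eqref{near-modi-conser res}; the extra terms $M^{-s}+\epsilon t\,M^{-s+1}$ in the momentum estimate come from the spectral-truncation error of the momentum flux and its linear-in-$t$ accumulation, and condition \eqref{another-non-res cond} is what upgrades the bounds from $O(\epsilon^{1/2})$ to $O(\epsilon)$.

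I expect the main obstacle to be the construction of the energy-type almost-invariant $\mathcal H$ from the symmetry conditions \eqref{sym cond} alone. Without symplecticity one cannot simply read off a conserved Hamiltonian of the modulation system, so the identity ensuring the near-conservation of $\mathcal H$ has to be extracted by a careful, ERKN-specific manipulation of the modulation equations that exploits $c_1=1/2$ and the two coupling relations in \eqref{sym cond}. A secondary difficulty is keeping the bookkeeping of the filter functions $b_1$, $\bar b_1$ and of the internal stage $Q^{n+c_1}$ consistent throughout the expansion, and tracking exactly where \eqref{another-non-res cond} and the spectral truncation enter the estimates.
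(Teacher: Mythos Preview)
Your proposal is correct and follows the same route as the paper's Section~\ref{sec: symmetric proof}: rerun the MFE construction of Section~\ref{sec: symplectic proof} with the symmetry conditions \eqref{sym cond}, which change only the Taylor coefficients of the operator $L^{k}$ (the leading term of $L^{\pm\langle j\rangle}$ becomes $\pm 2\mathrm{i}\epsilon h\sin(h\omega_j/2)/\bar b_1(h\omega_j)$ in place of $\pm 2\mathrm{i}\epsilon h^{2}\omega_j$), and then derive the three almost-invariants and relate them to \eqref{MMMM} by patching intervals. One small correction of attribution: the weight $\sigma(h\omega_j)$ does not arise in the translation $\zeta_j^{\pm\langle j\rangle}\to(q_j,p_j)$ (that step is identical to the symplectic case via \eqref{impor-zetaeta}) but already in the almost-invariants $\mathcal J_l,\mathcal K,\mathcal H$ themselves, precisely because the modified leading coefficient of $L^{\pm\langle j\rangle}$ propagates through the total-derivative identity \eqref{almost 1-2}--\eqref{almost 3-2}.
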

\begin{rem} It is noted that similar conservations have been shown for   symmetric and symplectic
trigonometric integrators  in \cite{Cohen08-1}. In the analysis of
that paper, the symmetry and symplecticity are needed simultaneously
and they play an important role  in the construction of modulated
Fourier expansions and deriving long time conservations. However, in
this paper we get the long-term analysis for two kinds of ERKN
methods: symplectic methods or symmetric methods. In the analysis,
we neither need symmetry for symplectic methods, nor need
symplecticity for symmetric methods. We show that for symplecticity
and symmetry, any one of them can produce a good long-time behaviour
for ERKN methods and it is not necessary to require both.
\end{rem}

\begin{rem} It is also noted that for  the  symplectic and symmetric ERKN method,
one has $\sigma\equiv1$. Thus   modified actions, modified momentum
and modified energy given in \eqref{MMMM} become the normal actions,
momentum and
  energy of the considered system, respectively. Therefore, we have the following result.
\end{rem}
\begin{cor}\label{main theo3}Under the  conditions  of Assumptions \ref{ass}  with
$s \geq \sigma + 1$, the  symplectic and symmetric ERKN method
\eqref{methods} has the near-conservation estimates
\eqref{near-conser res}.
\end{cor}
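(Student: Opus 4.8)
The plan is to read the statement off the previous two theorems; no new machinery is needed. The fastest route is this: a method that is both symplectic and symmetric satisfies in particular the symplecticity conditions \eqref{symple cond}, so Theorem \ref{main theo1} --- which requires only \eqref{symple cond} and not the symmetry conditions \eqref{sym cond} --- applies verbatim and delivers \eqref{near-conser res}. That already proves the corollary.

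It is nevertheless worth recording the route through Theorem \ref{main theo2}, which is the content of the remark preceding the corollary. Since the method is symmetric, Theorem \ref{main theo2} gives the near-conservation estimates \eqref{near-modi-conser res} for the modified actions, momentum and energy \eqref{MMMM}, which are built from the weight $\sigma(h\omega_j)=\tfrac12\sinc(\tfrac12 h\omega_j)/\bar b_1(h\omega_j)$. I would then show that $\sigma(h\omega_j)\equiv 1$ whenever both \eqref{sym cond} and \eqref{symple cond} hold; granting this, $\hat I_j=I_j$, $\hat K=K$ and $\hat H_M=H_M$ identically, so \eqref{near-modi-conser res} becomes exactly \eqref{near-conser res}.

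The one small computation is the identity $\sigma\equiv1$. Comparing the first relation of \eqref{sym cond}, namely $\bar b_1(V)=\phi_1(V)b_1(V)-\phi_0(V)\bar b_1(V)$, with the second relation of \eqref{symple cond}, namely $\phi_1(V)b_1(V)-\phi_0(V)\bar b_1(V)=c_1 d_1\phi_1(c_1^2 V)$, yields $\bar b_1(V)=c_1 d_1\phi_1(c_1^2 V)$; with $c_1=\tfrac12$ this reads $\bar b_1(h\Omega)=\tfrac12 d_1\sinc(h\Omega/2)$. Consistency of the method --- in the form $\bar b_1(0)=\tfrac12$, which holds for any ERKN method of order $\geq2$, in particular for the one-stage methods under consideration --- forces $d_1=1$, hence $\bar b_1(h\Omega)=\tfrac12\sinc(h\Omega/2)$ and $\sigma(h\omega_j)=\tfrac12\sinc(h\omega_j/2)/\big(\tfrac12\sinc(h\omega_j/2)\big)=1$. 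As a sanity check, this matches the entry for ERKN4 in Table \ref{praERKN}.

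I do not anticipate any genuine obstacle here: the corollary is essentially a bookkeeping statement, and the only nontrivial point --- the collapse $\sigma\equiv1$ of the modification weight for methods that are simultaneously symplectic and symmetric --- is the short algebraic manipulation above. The main thing to be careful about is that the consistency input $\bar b_1(0)=1/2$ is legitimately available within the standing assumptions, which it is.
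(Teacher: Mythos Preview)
Your proposal is correct and matches the paper's primary argument: the corollary is stated immediately after the remark that $\sigma\equiv 1$ for symplectic and symmetric methods, so it follows at once from Theorem~\ref{main theo1} (or, equivalently, from Theorem~\ref{main theo2} with the modification weight collapsing). Your short verification that $\sigma\equiv 1$ makes explicit what the paper merely asserts in that remark.

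It is worth noting, however, that the paper devotes Section~\ref{sec: ss proof} to an \emph{alternative} proof of Corollary~\ref{main theo3} that you do not mention. There the authors show that any one-stage symmetric ERKN method can be written as a Strang splitting $\Phi_h=\Phi_{h/2,\mathrm{L}}\circ\Phi_{h,\mathrm{NL}}\circ\Phi_{h/2,\mathrm{L}}$, and then relate it to the symmetric trigonometric integrator $\hat\Phi_h=\Phi_{h/2,\mathrm{NL}}\circ\Phi_{h,\mathrm{L}}\circ\Phi_{h/2,\mathrm{NL}}$ of \cite{Cohen08-1} via a conjugation identity (Proposition~\ref{connection thm}). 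In the symplectic-and-symmetric case this forces $\Upsilon\equiv 1$, so the near-conservation results already established in \cite{Cohen08-1} for trigonometric integrators transfer directly, after checking that the extra half-steps perturb the invariants only by $\mathcal{O}(\epsilon)$. Your route is simpler and self-contained; the paper's alternative has the merit of tying the ERKN result to the existing literature without reopening the modulated Fourier expansion machinery.
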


 \subsection{Numerical experiment} As a numerical experiment, we consider the non-linear wave equation
\eqref{wave equa} with the following data: $\rho=0.5,\ g(u)=-u^2$,
and the initial conditions
$u(x,0)=0.1\big(\frac{x}{\pi}-1\big)^3\big(\frac{x}{\pi}+1\big)^2,\
\partial_t u(x,0)=0.01\frac{x}{\pi}\big(\frac{x}{\pi}-1\big)\big(\frac{x}{\pi}+1\big)^2$
for $-\pi\leq x\leq \pi$. This problem has been considered in
\cite{Cohen08-1}.  The spatial discretisation is \eqref{prob} with
the  dimension $2M = 2^7$.
 We apply the four ERKN methods displayed in Table \ref{praERKN}
with  the  stepsize $h = 0.5$ to this problem on  $[0,100000]$. The
errors of energy, momentum and action  $I=\sum\limits_{j}I_j$ for
ERKN1,2,4 against $t$ and the results of modified energy, momentum
and action for ERKN3
 are shown  in Figure \ref{fig0}. 

It follows from the results that  the energy, momentum and action
 are not well conserved by   ERKN1 but are very well conserved by symplectic ERKN2 or symplectic and symmetric
 ERKN4. The symmetric but not symplectic ERKN3 has a good conservations of modified  energy, momentum and action
 over long times. These observations  support the results given in Theorems \ref{main
theo1}-\ref{main theo2} and Corollary \ref{main theo3}.

\begin{figure}[ptb]
\centering\tabcolsep=2mm
\begin{tabular}
[l]{lll}%
\includegraphics[width=6cm,height=6cm]{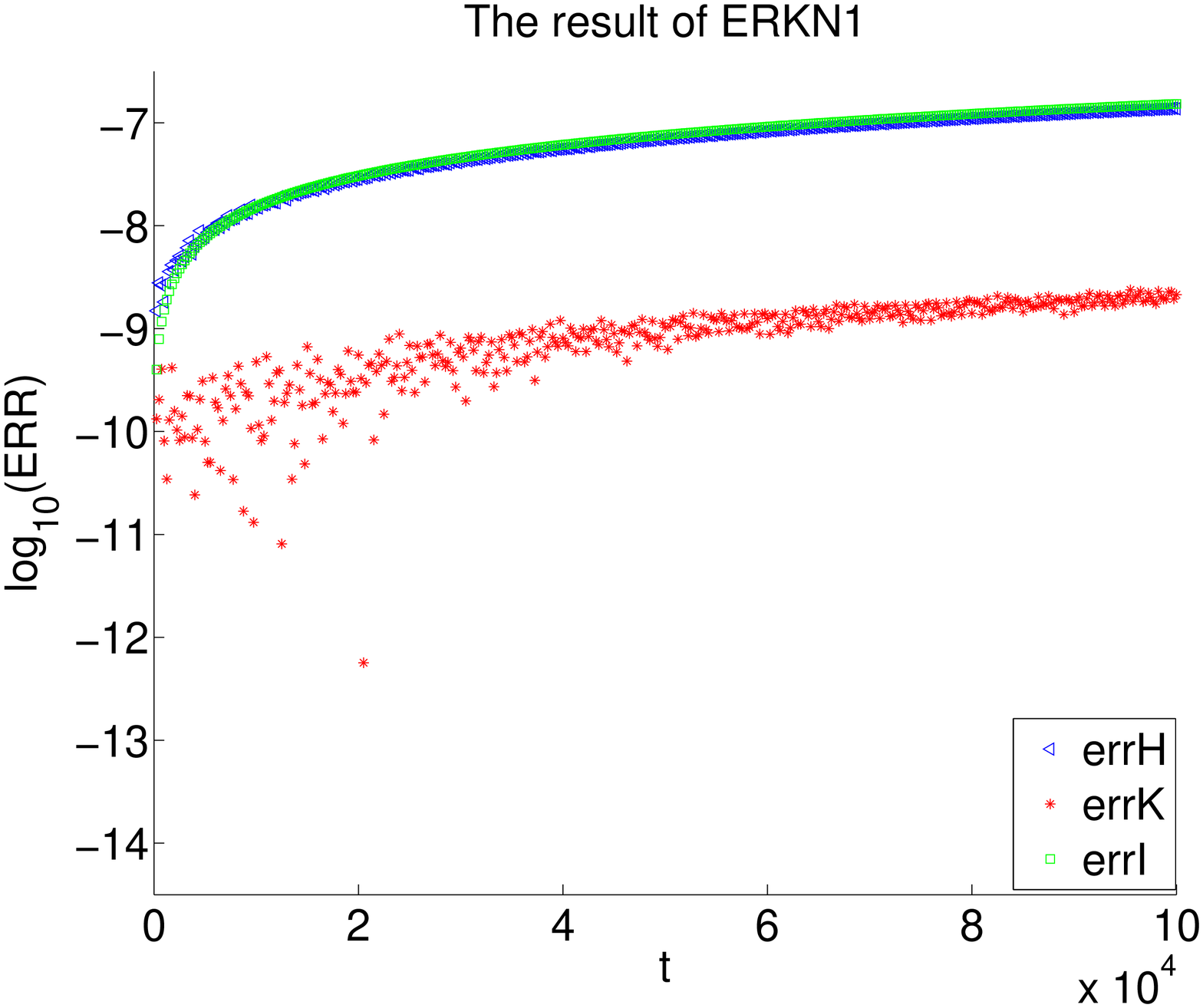}
\includegraphics[width=6cm,height=6cm]{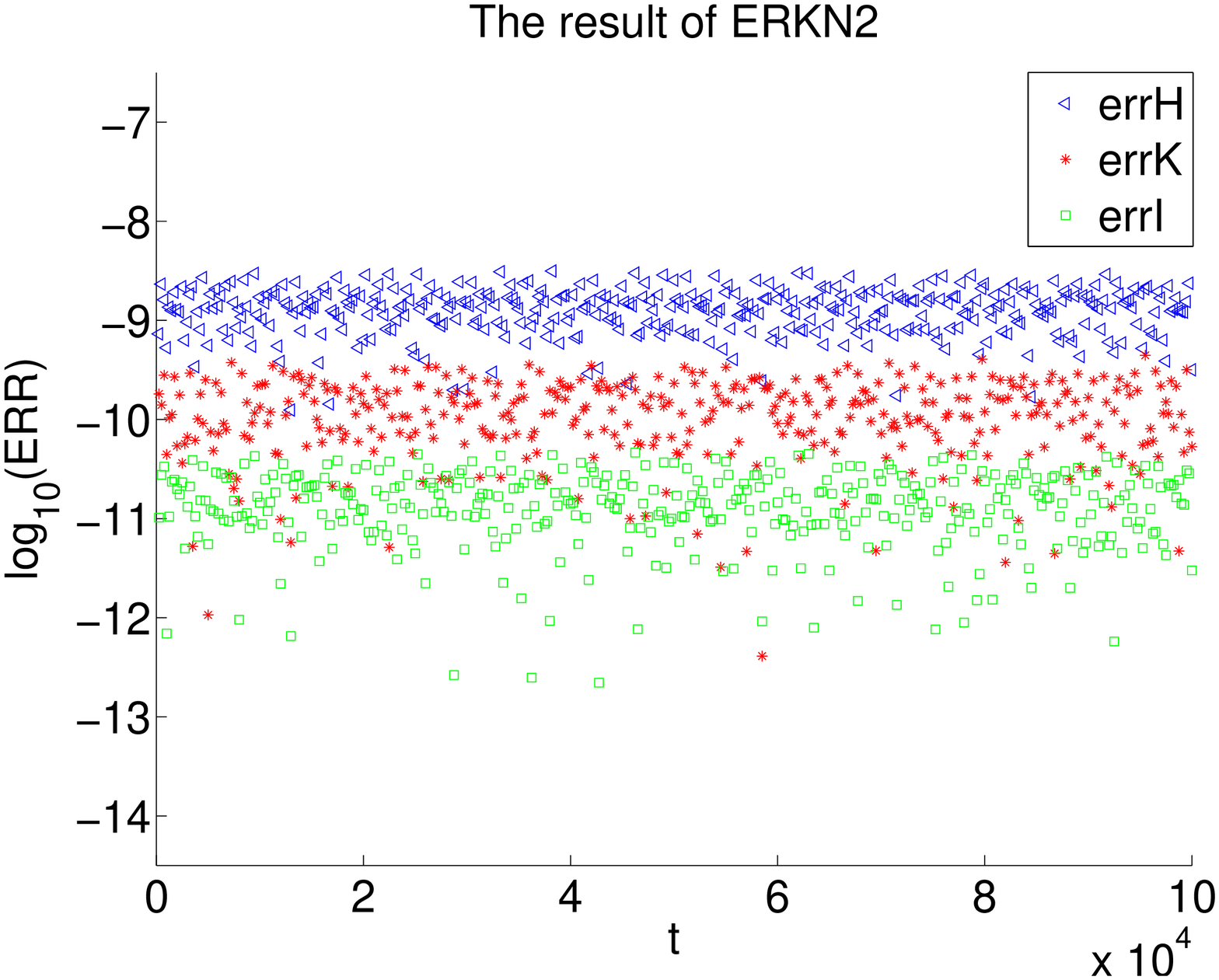}\\
\includegraphics[width=6cm,height=6cm]{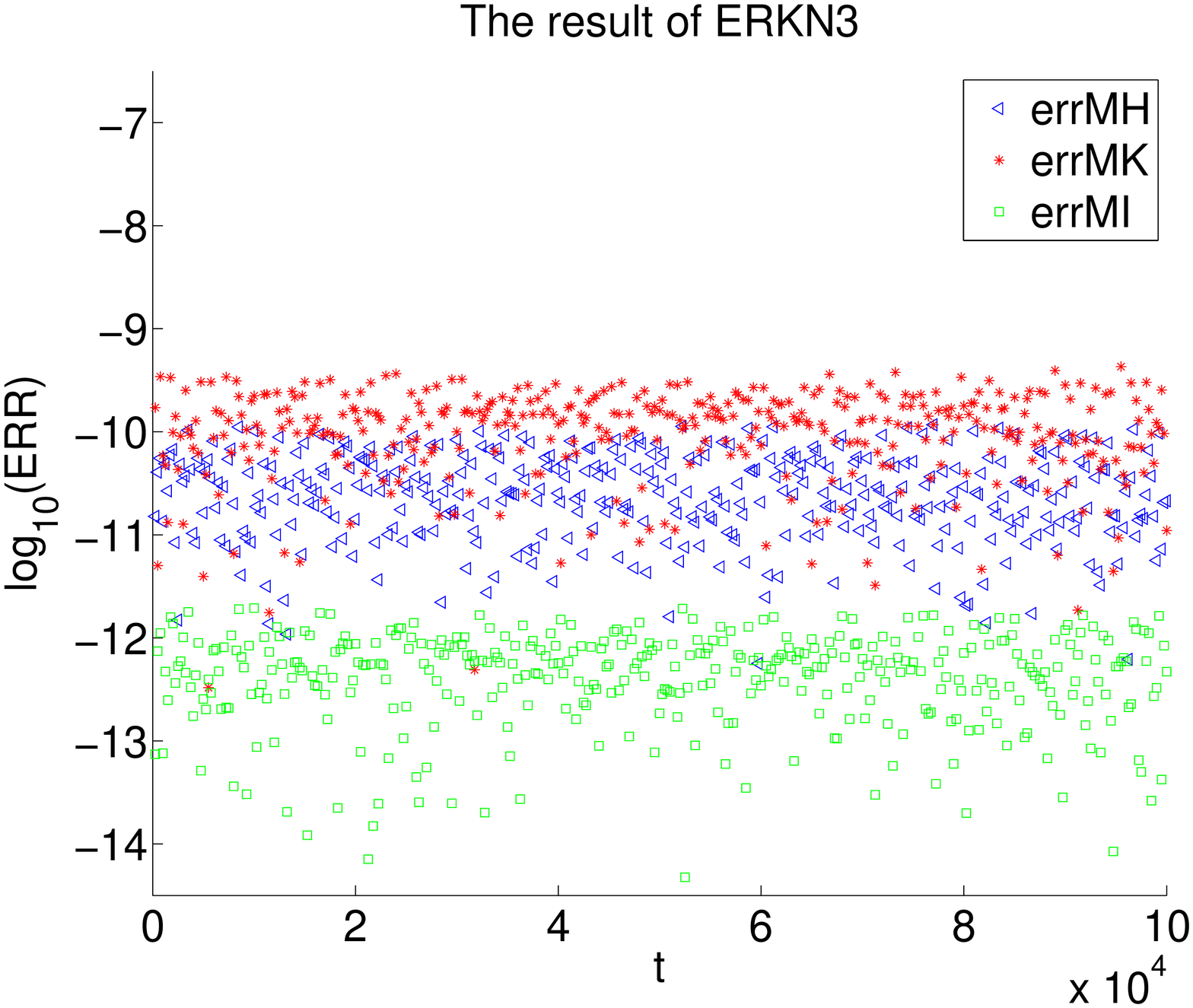}
\includegraphics[width=6cm,height=6cm]{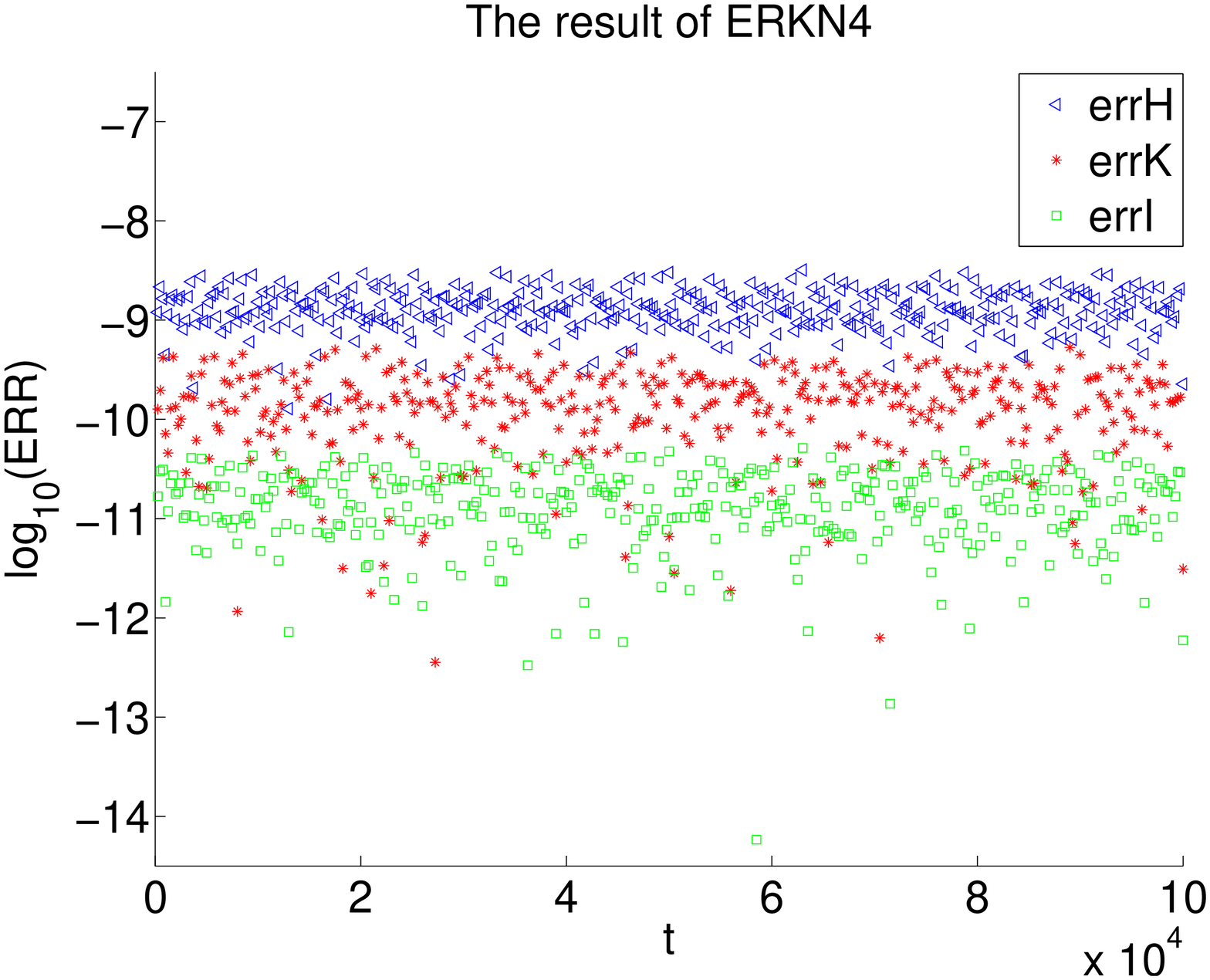}
\end{tabular}
\caption{The logarithm of the  errors against $t$.}%
\label{fig0}%
\end{figure}


\section{Proof of Theorem \ref{main
theo1}} \label{sec: symplectic proof} Theorem \ref{main theo1}  will
be proved in this section by using the technique of multi-frequency
modulated Fourier expansions. In the proof, the following   points
will be shown one by one in the rest of this section.
\begin{itemize}
\item In Sect. \ref{subsec:MEF}   the result of modulated Fourier
expansion is presented, which will be proved by Sects.
\ref{subsec:mod equ}-\ref{subsec:rem}.

\item  Sect. \ref{subsec:mod equ}  derives the  modulation equations of the modulation
functions.

\item In Sect. \ref{subsec interation}  an iterative construction of the
functions is considered by using  reverse Picard iteration.

\item A more convenient rescaling is chosen and  the estimation of  non-linear terms  is studied in Sect. \ref{subsec rescal}.

\item  Then the iteration is  reformulated in Sect. \ref{subsec:ref
rev}.

\item In Sect. \ref{subsec: bounds}  we control the size of the coefficient functions.

\item  The bound of the defect is  estimated in Sect. \ref{subsec:def}.

\item We study the
remainder of the numerical solution and its modulated Fourier
expansion in Sect.  \ref{subsec:rem}.

\item In Sect. \ref{subsec:alm inv} we show three almost-invariants of the
modulation system.

\item  We   establish the relationship between these  almost-invariants and the quantities of  the
system in  Sect.  \ref{subsec:rela}.

\item Finally, the previous results   are extended  to a long time interval in Sect.
\ref{subsec:fro to}.
\end{itemize}

Before presenting these analysis, we need to  define three operators
by
\begin{equation}\label{new L1L2}
 \begin{array}[c]{ll}
L_1^{k}:=& \big(\mathrm{e}^{\mathrm{i}(k \cdot\omega)
h}\mathrm{e}^{\epsilon
hD}-\cos(h\Omega)-h^2\Omega^2\textmd{sinc}(h\Omega)\bar{b}_1(h\Omega)b^{-1}_1(h\Omega)\big)\\
&\big(\bar{b}_1(h\Omega)b^{-1}_1(h\Omega)(\mathrm{e}^{\mathrm{i}(k
\cdot\omega) h}\mathrm{e}^{\epsilon
hD}-\cos(h\Omega))+\textmd{sinc}(h\Omega)\big)^{-1},\\
L_2^{k}= &
\cos(c_1h\Omega)+c_1\textmd{sinc}(c_1h\Omega)L_1^{k},\\
 L^{k}=
&\big(\mathrm{e}^{\mathrm{i}(k \cdot\omega) h}\mathrm{e}^{\epsilon
hD}-\cos(h\Omega)-\textmd{sinc}(h\Omega)L_1^{k}\big)
(\bar{b}_1(h\Omega)L_2^{k})^{-1} ,\end{array}\end{equation}
 where $D$ is the
differential operator (see \cite{hairer2006}). For the operator $
L^{k}$, the following result will be used in this section.
\begin{prop}\label{lhd pro}
  The  Taylor expansions of the operator $L^{k}$ are expressed as
\begin{equation}\label{L expansion}
\begin{aligned}&L^{\pm\langle j\rangle}(hD)\alpha_j^{\pm\langle j\rangle}(\epsilon t)= \pm2 \textmd{i}
\epsilon h^2   \omega_j \dot{\alpha}_j^{\pm\langle
j\rangle}(\epsilon t)+  \epsilon^2 h^3
\omega_j\cot(h\omega_j)\ddot{\alpha}_j^{\pm\langle
j\rangle}(\epsilon t)+\cdots,\\
 &L^{k}(hD)\alpha_j^{k}(\epsilon t)= 4h\omega_j\csc(h\omega_j) s_{\langle j\rangle+k}s_{\langle
j\rangle-k}\alpha_j^{k}(\epsilon t)+2\textmd{i}  \epsilon
h^2\omega_j\csc(h\omega_j) s_{2k} \dot{\alpha}_j^{k}(\epsilon t)+\cdots,\\
\end{aligned}
\end{equation}
for $\abs{j}>0$ and $k\neq \pm\langle j\rangle$, where $s_{k} =\sin(
\frac{h}{2}(k\cdot\omega))$  and the dots   denote derivatives with
respect to   $\tau=\epsilon t$.
\end{prop}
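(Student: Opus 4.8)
The plan is to compute the Taylor expansions of the three operators $L_1^k$, $L_2^k$, $L^k$ directly from their defining formulas in \eqref{new L1L2}, substituting $\ee^{\epsilon h D}$ by its Taylor series $1+\epsilon h D+\tfrac12\epsilon^2h^2D^2+\cdots$ and keeping track of powers of $\epsilon$ up to the order needed. The key observation that makes this tractable is that one must distinguish two regimes. First, when $k=\pm\langle j\rangle$, the scalar $\ee^{\pm\ii\omega_j h}$ lies on the unit circle at exactly the resonance of $\cos(h\omega_j)\pm\ii\sin(h\omega_j)$, so the zeroth-order term (in $\epsilon$) of the relevant combination vanishes; this forces a cancellation that shifts the leading behaviour of $L^{\pm\langle j\rangle}$ to order $\epsilon$. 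Second, when $k\neq\pm\langle j\rangle$, no such cancellation occurs and $L^k$ has a nonzero order-$\epsilon^0$ term, which one identifies via the product-to-sum identity $\cos A-\cos B=2\sin\tfrac{A+B}{2}\sin\tfrac{A-B}{2}$ to produce the factor $s_{\langle j\rangle+k}s_{\langle j\rangle-k}$.

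Concretely, I would proceed in three steps. Step one: expand $L_1^k$. Writing $z=\ee^{\ii(k\cdot\omega)h}\ee^{\epsilon hD}$, we have $L_1^k=\big(z-\cos(h\Omega)-h^2\Omega^2\sinc(h\Omega)\bar b_1 b_1^{-1}\big)\big(\bar b_1 b_1^{-1}(z-\cos(h\Omega))+\sinc(h\Omega)\big)^{-1}$, acting diagonally in the Fourier index $j$. Substituting $z=\ee^{\ii(k\cdot\omega)h}(1+\epsilon hD+\cdots)$ and expanding the inverse as a Neumann-type series in $\epsilon$, I collect the $\epsilon^0$, $\epsilon^1$, $\epsilon^2$ coefficients. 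Step two: insert this into $L_2^k=\cos(c_1h\Omega)+c_1\sinc(c_1h\Omega)L_1^k$, which is a trivial linear substitution. Step three: insert both into $L^k=\big(z-\cos(h\Omega)-\sinc(h\Omega)L_1^k\big)(\bar b_1 L_2^k)^{-1}$ and again expand the inverse of $\bar b_1 L_2^k$ in powers of $\epsilon$ around its $\epsilon^0$ value. Then specialise: for $k=\pm\langle j\rangle$ use $\ee^{\pm\ii\omega_j h}-\cos(h\omega_j)=\pm\ii\sin(h\omega_j)$ and simplify using $h^2\omega_j^2\sinc(h\omega_j)=h\omega_j\sin(h\omega_j)$ and the cotangent that appears when dividing by $\sin(h\omega_j)$; for general $k$ keep $\ee^{\ii(k\cdot\omega)h}-\cos(h\omega_j)$ and rewrite its real and imaginary parts using $\cos((k\cdot\omega)h)-\cos(h\omega_j)=-2s_{\langle j\rangle+k}s_{\langle j\rangle-k}$ (up to sign bookkeeping with $s_{\langle j\rangle-k}$) and $\sin((k\cdot\omega)h)=2s_k\cos(\tfrac{h}{2}(k\cdot\omega))$, together with $\csc(h\omega_j)=1/\sin(h\omega_j)$.

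The main obstacle I anticipate is organising the algebra so that the delicate cancellations in the resonant case $k=\pm\langle j\rangle$ are visible and the leading coefficients come out exactly as $\pm 2\ii\epsilon h^2\omega_j$ and $\epsilon^2 h^3\omega_j\cot(h\omega_j)$; a naive expansion produces many terms that must be shown to combine, and one has to verify that the $\epsilon^0$ term indeed vanishes identically (not just to leading order) before extracting the $\epsilon^1$ term. A secondary bookkeeping difficulty is tracking the denominators $\bar b_1(h\Omega)$ and the combinations $\bar b_1 b_1^{-1}$: one must confirm they are invertible on the relevant frequency range (which follows from the uniform-boundedness hypothesis on $b_1,\bar b_1$ together with the non-resonance conditions \eqref{further-non-res cond}) so that the Neumann expansions are justified. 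Once the two regimes are handled, writing "$+\cdots$" for the higher-order terms in $\epsilon$ is legitimate because $D$ commutes with all the diagonal operators and each additional power of $\epsilon$ comes with one additional $D$, i.e. one additional $\tau$-derivative of $\alpha_j^k$, which is exactly the meaning of the dots in \eqref{L expansion}.
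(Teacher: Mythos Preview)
The paper states Proposition~\ref{lhd pro} without proof, so your plan of direct Taylor expansion of $L_1^k$, $L_2^k$ and $L^k$ is exactly the intended route, and your distinction between the resonant and non-resonant regimes, together with the product-to-sum identity producing $s_{\langle j\rangle+k}s_{\langle j\rangle-k}$, is correct.

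There is, however, one essential ingredient you do not mention: the symplecticity conditions \eqref{symple cond}. Proposition~\ref{lhd pro} sits inside Section~\ref{sec: symplectic proof}, whose standing hypothesis is that the method is symplectic, and the specific clean coefficients in \eqref{L expansion} --- with no trace of $\bar b_1(h\omega_j)$, $b_1(h\omega_j)$ or $c_1$ --- are obtained only by invoking \eqref{symple cond}. Concretely, after expanding you will find (at $k=\pm\langle j\rangle$, order $\epsilon^1$) a factor of the form $e^{\pm\ii(1-c_1)h\omega_j}\big(b_1(h\omega_j)\pm\ii h\omega_j\,\bar b_1(h\omega_j)\big)^{-1}$ multiplying the would-be leading term; it is precisely the two relations in \eqref{symple cond}, combined as real and imaginary parts, that give
\[
b_1(h\omega_j)\pm\ii h\omega_j\,\bar b_1(h\omega_j)=d_1\,e^{\pm\ii(1-c_1)h\omega_j},
\]
which collapses that factor. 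That this is not a cosmetic point is made clear by Section~\ref{sec: symmetric proof}: under the symmetry conditions \eqref{sym cond} instead of symplecticity, the same operator has the \emph{different} expansion \eqref{Lll expansion}, in which $1/\bar b_1(h\omega_j)$ persists. So the ``delicate cancellations'' you anticipate are not merely organisational; they require you to feed in \eqref{symple cond}. Your remark about invertibility of $\bar b_1$ is a side issue by comparison.
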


\subsection{The result of modulated Fourier expansion} \label{subsec:MEF}

Following the techniques and tools  developed in
\cite{Cohen05,Cohen08,Cohen08-1,Hairer08,17-new}, the long-time
analysis of ERKN methods when applied to  wave equation is based on
a short-time  multi-frequency modulated Fourier expansion, which
will be constructed in this part and proved in Sects.
\ref{subsec:mod equ}-\ref{subsec:rem}. In what follows,  we  set $
[[k]]=\left\{\begin{aligned} & (\norm{k}+1)/2,\quad
k\neq0,\\
&3/2,\qquad\quad\quad \ \  k=0.
\end{aligned}\right.
$

\begin{theo}\label{MFE thm}
Under the conditions of Theorem \ref{main theo1}, the numerical
solution $(q^n, p^n)$ given by
  the ERKN method
\eqref{methods} admits the following multi-frequency modulated
Fourier expansion (with $N$ from \eqref{near-resonant R})
\begin{equation}
\begin{aligned} &\tilde{q}(t)= \sum\limits_{\norm{k}\leq 2N} \mathrm{e}^{\mathrm{i}(k \cdot \omega) t}\zeta^k(\epsilon t),\
\ \tilde{p}(t)=  \sum\limits_{\norm{k}\leq2N} \mathrm{e}^{\mathrm{i}(k \cdot \omega) t}\eta^k(\epsilon t)\\
\end{aligned}
\label{MFE-ERKN}%
\end{equation}
such that
\begin{equation}
\norm{q^n-\tilde{q}(t)}_{s+1}+\norm{p^n-\tilde{p}(t)}_{s}\leq
C\epsilon^N\quad for \quad 0\leq t=nh\leq \epsilon^{-1},
\label{error MFE}%
\end{equation}
where we use the notation introduced in \eqref{denot} with $k_l = 0
$ for $l> M$. The expansion \eqref{MFE-ERKN} is bounded by
\begin{equation}
\norm{\tilde{q}(t)}_{s+1}+\norm{\tilde{p}(t)}_{s}\leq C\epsilon\quad
for \quad 0\leq t\leq \epsilon^{-1}.
\label{bound TMFE}%
\end{equation}
It is further obtained that for $|j|\leq M$,
\begin{equation}\begin{aligned}
&\tilde{q}_j(t)=\zeta_j^{\langle j\rangle}(\epsilon t)
\mathrm{e}^{\mathrm{i} \omega_j t}+\zeta_j^{-\langle
j\rangle}(\epsilon t) \mathrm{e}^{-\mathrm{i} \omega_j t}+r_j \quad
\textmd{with} \quad \norm{r}_{s+1}\leq C\epsilon^2.
\end{aligned}
\label{jth TMFE}
\end{equation}
(If the condition  \eqref{another-non-res cond} fails to be
satisfied, then the bound is $\norm{r}_{s+1}\leq C\epsilon^{3/2}$.)
The modulation functions $\zeta^k$  are bounded by
\begin{equation}\begin{aligned}
&\sum\limits_{\norm{k}\leq2N}\Big(\frac{\omega^{|k|}}{\epsilon^{[[k]]}}\norm{\zeta^k(\epsilon
t)}_s\Big)^2\leq C.
\end{aligned}
\label{bound modu func}%
\end{equation}
The same bounds      hold for any fixed number of derivatives of
 $\zeta^k$  with respect to the slow time  $\tau = \epsilon
t$. The relationship between $\zeta^k$ and $\eta^k$ is given by
\eqref{MFE-zetaeta} in next section. Moreover, we have
$\zeta_{-j}^{-k}=\bar{\zeta_{j}^{k}}$. The constant $C$ is
independent of $\epsilon, M, h$ and   $t \leq \epsilon^{-1}.$
\end{theo}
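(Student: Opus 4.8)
The plan is to construct the modulated Fourier expansion \eqref{MFE-ERKN} by plugging the ansatz into the ERKN recursion \eqref{methods} and matching coefficients of $\mathrm{e}^{\mathrm{i}(k\cdot\omega)t}$, following the scheme laid out in the bullet list of Section \ref{sec: symplectic proof}. First I would eliminate the internal stage $Q^{n+c_1}$ and the momentum variable from \eqref{methods} to obtain a two-step-type recursion purely in $q^n$; substituting $\tilde q(t)=\sum_{\norm{k}\le 2N}\mathrm{e}^{\mathrm{i}(k\cdot\omega)t}\zeta^k(\epsilon t)$ and using that the shift $t\mapsto t+h$ acts on the slow argument as $\mathrm{e}^{\epsilon hD}$, the linear part of the recursion applied to the $k$-th mode produces exactly the operator $L^k(hD)$ of \eqref{new L1L2} acting on $\zeta^k$, while the nonlinearity $\tilde g$ expands (via Taylor expansion of $V$ around the dominant terms $\zeta_j^{\pm\langle j\rangle}$) into a sum over convolutions of the $\zeta$'s with combinatorial structure indexed by $k=k_1+\cdots$. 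This yields the modulation equations of Sect. \ref{subsec:mod equ}, of the schematic form $L^k(hD)\zeta^k = (\text{nonlinear terms in lower-order }\zeta)$, together with the companion relation expressing $\eta^k$ in terms of $\zeta^k$.

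Next I would set up the reverse Picard iteration (Sect. \ref{subsec interation}): because of Proposition \ref{lhd pro}, the operator $L^{\pm\langle j\rangle}$ begins at order $\epsilon$ (it is $\pm 2\mathrm{i}\epsilon h^2\omega_j\frac{d}{d\tau}+\cdots$), so the equations for the dominant modes $k=\pm\langle j\rangle$ are solved by integration in $\tau$ and fix the initial data, whereas for $k\neq\pm\langle j\rangle$ the operator $L^k$ is invertible with leading term $4h\omega_j\csc(h\omega_j)s_{\langle j\rangle+k}s_{\langle j\rangle-k}$, whose size is controlled precisely by the non-resonance condition \eqref{inequa}. After the rescaling of Sect. \ref{subsec rescal} — passing to variables weighted by $\omega^{|k|}\epsilon^{-[[k]]}$ so that the nonlinearity becomes a smooth map on a product of Sobolev balls, using the standard algebra-type estimates for the norm $\norm{\cdot}_s$ — one reformulates the iteration (Sect. \ref{subsec:ref rev}) as a fixed-point map whose contraction/boundedness gives \eqref{bound modu func} after finitely many steps (Sect. \ref{subsec: bounds}). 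The $[[k]]$ bookkeeping is exactly what makes each Picard step gain a power of $\epsilon$ consistent with $\norm{k}$, so the iteration closes after $O(N)$ steps.

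The bound \eqref{error MFE} then follows by a defect estimate: inserting the truncated expansion back into the ERKN scheme \eqref{methods} leaves a defect of size $O(\epsilon^{N+1})$ per step (Sect. \ref{subsec:def}), and a discrete Gronwall argument over $n\le\epsilon^{-1}/h$ steps — carefully using the numerical non-resonance conditions \eqref{further-non-res cond} and \eqref{another-non-res cond} to invert the linear ERKN propagator and to show the defect does not amplify — propagates this into the stated $O(\epsilon^N)$ bound on $\norm{q^n-\tilde q(t)}_{s+1}+\norm{p^n-\tilde p(t)}_s$ (Sect. \ref{subsec:rem}). The statements \eqref{bound TMFE} and \eqref{jth TMFE} are read off from \eqref{bound modu func}: the $k=\pm\langle j\rangle$ terms contribute at order $\epsilon$, all others at order $\epsilon^2$ (or $\epsilon^{3/2}$ when \eqref{another-non-res cond} is dropped, since then the $k=\pm\langle j_1\rangle\pm\langle j_2\rangle$ modes are only controlled by the weaker bound). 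The symmetry relation $\zeta^{-k}_{-j}=\overline{\zeta^k_j}$ comes from the reality constraint $q_{-j}=\bar q_j$ being preserved by the construction.

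The main obstacle I expect is the interplay between the Sobolev weights and the two distinct non-resonance mechanisms. Unlike the symmetric trigonometric-integrator case of \cite{Cohen08-1}, here the operator $L^k$ built from $b_1(h\Omega)$, $\bar b_1(h\Omega)$ is not a priori of a convenient symmetric form, so establishing that $L^k$ is boundedly invertible on $H^s$ with the operator norm governed by $\bigl(s_{\langle j\rangle+k}s_{\langle j\rangle-k}\bigr)^{-1}$ — uniformly in $M$ and $h$, and compatibly with the near-resonant set $\mathcal R_{\epsilon,h}$ in \eqref{near-resonant R}–\eqref{non-resonance cond} — requires the delicate expansions of Proposition \ref{lhd pro} and careful tracking of how the uniform boundedness of $b_1,\bar b_1$ enters. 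This is the step where the novel adaptations to ERKN discretisation (as opposed to trigonometric integrators) really do the work, and where one must be most careful that all constants are independent of $\epsilon,M,h$ and $t\le\epsilon^{-1}$.
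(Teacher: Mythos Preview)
Your proposal is correct and follows essentially the same route as the paper (Sections~\ref{subsec:mod equ}--\ref{subsec:rem}). The only notable departures are that the paper does not eliminate $p^n$ and $Q^{n+c_1}$ into a two-step recursion but keeps separate modulated ans\"atze for all three quantities (yielding the relations $L_1^k\zeta^k=h\eta^k$ and $\xi^k=L_2^k\zeta^k$ directly from the one-step scheme), and the defect bound in Section~\ref{subsec:def} is obtained by splitting into three cases---truncated, near-resonant, and non-resonant modes---with the near-resonant case handled via a second rescaling $\hat c\zeta_j^k=\omega^{s|k|}\epsilon^{-[[k]]}\zeta_j^k$ together with condition~\eqref{non-resonance cond}, a point your outline does not mention.
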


\subsection{Modulation equations of the  modulation functions}\label{subsec:mod equ}
According to the second and third formulae of \eqref{methods}, it is
arrived that
\begin{equation*}
\begin{aligned} &q^{n+1}-\big(\cos(h\Omega)+h^2\Omega^2\sinc(h\Omega)\bar{b}_1(h\Omega)b^{-1}_1(h\Omega)\big)q^{n}\\
=&h\bar{b}_1(h\Omega)b^{-1}_1(h\Omega)p^{n+1}+h\big(\sinc(h\Omega)-\bar{b}_1(h\Omega)b^{-1}_1(h\Omega)\cos(h\Omega)\big)p^{n}.
\end{aligned}
\end{equation*}
Inserting \eqref{MFE-ERKN} into this result and  comparing the
coefficients of $\mathrm{e}^{\mathrm{i}(k \cdot \omega) t}$ implies
\begin{equation}\label{MFE-zetaeta}%
\begin{aligned}& L_1^{k} \zeta^k=h\eta^k,
\end{aligned} %
\end{equation}
where the definition of $L_1^{k}$ \eqref{new L1L2} is used here.

According to the first   term  of     \eqref{methods}, we consider
the function
\begin{equation}
\begin{aligned} &\tilde{q}_{h}(t)=\sum\limits_{\norm{k}\leq 2N}
\mathrm{e}^{\mathrm{i}(k \cdot \omega) t}\xi^k(t\epsilon)
\end{aligned}
\label{MFE-2}%
\end{equation}
as an ansatz of $Q^{n+\frac{1}{2}}$ in  \eqref{methods}. Inserting
\eqref{MFE-ERKN} and \eqref{MFE-2} into the first  term
 of \eqref{methods}  and comparing the coefficients of $\mathrm{e}^{\mathrm{i}(k \cdot
\omega) t}$, we have
\begin{equation*}
\xi^k(t\epsilon)=L_2^{k}\zeta^k(t\epsilon).
\end{equation*}

Inserting \eqref{MFE-ERKN} and \eqref{MFE-2} into the second formula
of \eqref{methods}, expanding the right-hand side into a Taylor
series around zero and comparing the coefficients of
$\mathrm{e}^{\mathrm{i}(k \cdot \omega) t}$ leads to
\begin{equation}\label{ljkqp}
\begin{aligned}L^{\pm\langle j\rangle} \zeta_j^{\pm\langle j\rangle}(t\epsilon)=&-h^2\sum\limits_{m\geq 2}\frac{g^{(m)}(0)}{m!}
\sum\limits_{k^1+\cdots+k^m=\pm\langle
j\rangle}\sum\limits_{j_1+\cdots+j_m\equiv j\ \textmd{mod}\ 2M}'
\big(\zeta_{j_1}^{k^1}\cdot\ldots\cdot
\zeta_{j_m}^{k^m}\big)(t\epsilon),\\
L^k \zeta_j^k(t\epsilon)=&-h^2\sum\limits_{m\geq
2}\frac{g^{(m)}(0)}{m!}
\sum\limits_{k^1+\cdots+k^m=k}\sum\limits_{j_1+\cdots+j_m\equiv j\
\textmd{mod}\ 2M}'  \big(\zeta_{j_1}^{k^1}\cdot\ldots\cdot
\zeta_{j_m}^{k^m}\big)(t\epsilon),\\
\end{aligned}
\end{equation}
where the right-hand side is obtained  in a similar way to that in
\cite{Cohen08-1,Hairer08}. The prime on the sum indicates that a
factor $1/2$ is included in the appearance of $\zeta_{k_i}^{j_i}$
with $j_i = \pm M$.

Similarly to the analysis in Sect. 6.2 of \cite{Cohen08-1}, for the
case that $k = \pm \langle j\rangle$, the dominating term in
\eqref{L expansion} is $ \pm2 \textmd{i} \epsilon h^2   \omega_j
\dot{\zeta}_j^{\pm \langle j\rangle}$ due to the fact that the first
term vanishes and the condition  \eqref{further-non-res cond}. When
$k \neq \pm \langle j\rangle$,
 the first
term  in \eqref{L expansion}  is dominant under the condition
\eqref{inequa}.  If the inequality \eqref{inequa} fails to be true,
the non-resonance condition  \eqref{non-resonance cond}   ensures
that the defect in simply setting $ \zeta_j^k\equiv 0$ is of size
$\mathcal{O}(\epsilon^{N+1})$ in an appropriate Sobolev-type norm.
By the above analysis and \eqref{ljkqp}, formal modulation equations
are obtained for the modulated functions for $ \zeta_j^k$. Then the
 modulated functions for $ \eta_j^k$ are derived by
 \eqref{MFE-zetaeta}.

On the other hand, we need to derive the initial values
 for $\dot{\zeta}_j^{\pm \langle j\rangle}$ appearing in the first   modulation equation of \eqref{ljkqp}. From $\tilde{q} (0) =
q^0$, it follows that
\begin{equation}\label{initial pl}
\begin{aligned}&\zeta_j^{\langle
j\rangle}(0)+\zeta_j^{-\langle j\rangle}(0)=q_j^0-\sum\limits_{k\neq
\pm  \langle j\rangle} \zeta_j^{k}(0).
\end{aligned}
\end{equation}
Moreover, according to the formula \eqref{MFE-zetaeta}, we have
 \begin{equation}\label{impor-zetaeta}%
\begin{aligned}&\eta_j^{\pm\langle j\rangle}=\pm\textmd{i} \omega_j\zeta_j^{\pm\langle
j\rangle}+e^{\textmd{i}(2c_1-1)h\omega_j} \epsilon
h^2\omega_j\csc(h\omega_j)\dot{\zeta}_j^{\pm\langle
j\rangle}\pm\cdots.
\end{aligned} %
\end{equation}
Then the condition $\tilde{p} (0) = p^0$ yields
\begin{equation}\label{initial mi}
\begin{aligned}& \eta_j^{\langle
j\rangle}(0)+\eta_j^{-\langle j\rangle}(0)=p_j^0-\sum\limits_{k\neq
\pm  \langle j\rangle} \eta_j^{k}(0)\\
&= \textmd{i} \omega_j(\zeta_j^{\langle
j\rangle}(0)-\zeta_j^{-\langle
j\rangle}(0))+2e^{\textmd{i}(2c_1-1)h\omega_j} \epsilon
h^2\omega_j\csc(h\omega_j)\dot{\zeta}_j^{\pm\langle
j\rangle}(0)+\cdots.
\end{aligned}
\end{equation}
The formulae \eqref{initial pl} and \eqref{initial mi} determine the
initial values for $\zeta_j^{\pm \langle j\rangle}(0)$.

\subsection{Reverse Picard iteration}\label{subsec interation}
In this subsection, we consider   an iterative construction of the
functions $\zeta_j^k$  such that after $4N$ iteration steps, the
defects in \eqref{ljkqp}, \eqref{initial pl} and \eqref{initial mi}
are of magnitude $\mathcal{O}(\epsilon^{N+1})$ in the $H^s$ norm. In
the iteration procedure, only the dominant terms are kept on the
left-hand side, which is called as reverse Picard iteration by
following \cite{Cohen08-1,Hairer08}.

Denote  the $n$th iterate  by $[\cdot]^{(n)}$.    For $k = \pm
\langle j\rangle$, the iteration procedure is considered as
\begin{equation}\label{Pic ite j}
\begin{aligned}\pm2 \textmd{i}
\epsilon h^2   \omega_j \big[\dot{\zeta}_j^{\pm  \langle
j\rangle}\big]^{(n+1)}=&\Big[-h^2\sum\limits_{m\geq
2}\frac{g^{(m)}(0)}{m!}
\sum\limits_{k^1+\cdots+k^m=k}\sum\limits_{j_1+\cdots+j_m\equiv j\
\textmd{mod}\ 2M}'\big(\zeta_{j_1}^{k^1}\cdot\ldots\cdot
\zeta_{j_m}^{k^m}\big)  \\& -\big(\epsilon^2 h^3
\omega_j\cot(h\omega_j) \ddot{\zeta}_j^{\pm \langle
j\rangle}+\cdots\big)\Big]^{(n)}.
\end{aligned}
\end{equation}
For $k \neq\pm  \langle j\rangle$
 and
$j$ satisfying the non-resonant \eqref{inequa}, we consider the
following iteration procedure
\begin{equation}\label{Pic ite notj}
\begin{aligned}4h\omega_j\csc(h\omega_j) s_{\langle j\rangle+k}s_{\langle
j\rangle-k} \big[\zeta_j^k\big]^{(n+1)}
=&\Big[-h^2\sum\limits_{m\geq 2}\frac{g^{(m)}(0)}{m!}
\sum\limits_{k^1+\cdots+k^m=k}\sum\limits_{j_1+\cdots+j_m\equiv j\
\textmd{mod}\ 2M}'\\
& \big(\zeta_{j_1}^{k^1}\cdot\ldots\cdot
\zeta_{j_m}^{k^m}\big)-\big(2\textmd{i}  \epsilon
h^2\omega_j\csc(h\omega_j) s_{2k}
\dot{\zeta}_j^k+\cdots\big)\Big]^{(n)}.
\end{aligned}
\end{equation}
For the initial values  \eqref{initial pl}, the iteration procedure
becomes
\begin{equation}\label{pica initial pl}
\begin{aligned}\big[\zeta_j^{\langle
j\rangle}(0)+\zeta_j^{-\langle
j\rangle}(0)\big]^{(n+1)}=&\big[q_j^0-\sum\limits_{k\neq \pm \langle
j\rangle} \zeta_j^{k}(0)\big]^{(n)},\\
\textmd{i} \omega_j\big[\zeta_j^{\langle
j\rangle}(0)-\zeta_j^{-\langle
j\rangle}(0)\big]^{(n+1)}=&\big[p_j^0-\sum\limits_{k\neq \pm \langle
j\rangle} \frac{1}{h}L_1^{k}
\zeta_j^{k}(0)+2e^{\textmd{i}(2c_1-1)h\omega_j} \epsilon
h^2\omega_j\csc(h\omega_j)\dot{\zeta}_j^{\pm\langle
j\rangle}(0)+\cdots\big]^{(n)}.
\end{aligned}
\end{equation}

Following \cite{Cohen08-1}, we  assume  that $\norm{k}\leq K:= 2N$
and  $\norm{k^i}\leq K$ for $i = 1,\ldots ,m$ in these iterations.
Each iteration step involves an initial value problem of first-order
ODEs for $\zeta_j^{\pm \langle j\rangle}$ (for $|j|\leq M$) and
algebraic equations for $\zeta_j^{k}$  with $k \neq\pm \langle
j\rangle$. We choose the starting iterates ($n = 0$) as
$\zeta_j^{k}(\tau)=0$   for $k \neq\pm \langle j\rangle$, and
$\zeta_j^{\pm\langle j\rangle}(\tau)=\zeta_j^{\pm\langle
j\rangle}(0)$, where $\zeta_j^{\pm\langle j\rangle}(0)$  are
determined by the above formula. We remark that $q_{-j}^0 = q_{j}^0$
  for real initial data, and  the above
iteration implies $\big[\zeta_{-j}^{-k}\big]^n
=\overline{\big[\zeta_{-j}^{-k}\big]^n}$  for all iterates $n$ and
all $j, k$.

\subsection{Rescaling and estimation of the nonlinear terms}\label{subsec rescal}
Following  Sect. 3.5 of \cite{Cohen08} and Sect. 6.3 of
\cite{Cohen08-1}, this subsection considers a more convenient
rescaling
\begin{equation*}
\begin{aligned}
c\zeta_{j}^{k}=\frac{\omega^{|k|}}{\epsilon^{[[k]]}}\zeta_j^k,\ \
c\zeta^{k}=\big(c\zeta_{j}^{k}\big)_{|j|\leq
M}=\frac{\omega^{|k|}}{\epsilon^{[[k]]}}\zeta^k
\end{aligned}
\end{equation*}
 in the space $\mathrm{H}^s = (H^s)^{\mathcal{K}} = \{c\zeta =
(c\zeta^{k})_{k\in \mathcal{K}}: c\zeta^{k}\in H^s \}.$ The norm of
this space is chosen as $|||c\zeta|||_s^2=\sum\limits_{k\in
\mathcal{K}} \norm{c\zeta^{k}}_s^2$ and   the superscripts $k$ are
in the set $\mathcal{K}=\{k=(k_l)_{l=0}^M\ \textmd{with integers} \
k_l:\ \norm{k}\leq K\}$ with $K = 2N.$

In order to express the non-linearity in \eqref{ljkqp} in these
rescaled variables, we define the nonlinear function
$\textbf{f}=(f_j^k)$ by
\begin{equation*}
\begin{aligned}f_j^k\big(c\zeta(\tau)\big) =&\frac{\omega^{|k|}}{\epsilon^{[[k]]}}\sum\limits_{m= 2}^N\frac{g^{(m)}(0)}{m!}
\sum\limits_{k^1+\cdots+k^m=k}\frac{\epsilon^{[[k^1]]+\cdots+[[k^m]]}}{\omega^{|k^1|+\cdots+|k^m|}}
\sum\limits_{j_1+\cdots+j_m\equiv j\ \textmd{mod}\ 2M}'
\big(c\zeta_{j_1}^{k^1}\cdot\ldots\cdot
c\zeta_{j_m}^{k^m}\big)(\tau).
\end{aligned}
\end{equation*}
 By the results given in Sect. 3.5 of  \cite{Cohen08} and Sect. 6.4 of
 \cite{Cohen08-1},
 it is easy to verify that
\begin{equation}\label{bounds f}
\begin{aligned} &\sum\limits_{k\in \mathcal{K}}
\norm{f^{k}(c\zeta)}_s^2\leq \epsilon P(|||c\zeta|||_s^2),\ \
\sum\limits_{|j|\leq M} \norm{f^{\pm\langle j\rangle
}(c\zeta)}_s^2\leq \epsilon^3 P_1(|||c\zeta|||_s^2),
\end{aligned}
\end{equation}
where $P$ and $P_1$ are polynomials with coefficients bounded
independently of $\epsilon, h,$ and $M$.

Likewise,  define   the following different rescaling
\begin{equation}\label{diff resca}
\begin{aligned}
\hat{c}\zeta_{j}^{k}=\frac{\omega^{s|k|}}{\epsilon^{[[k]]}}\zeta_j^k,\
\ \hat{c}\zeta^{k}=\big(\hat{c}\zeta_{j}^{k}\big)_{|j|\leq
M}=\frac{\omega^{s|k|}}{\epsilon^{[[k]]}}\zeta^k
\end{aligned}
\end{equation}
 in   $\mathrm{H}^1 = (H^1)^{\mathcal{K}}$ with norm
$|||\hat{c}\zeta|||_1^2=\sum\limits_{\norm{k}\leq K}
\norm{\hat{c}\zeta^{k}}_1^2$, where   $\hat{f}_j^k$ is defined as
$f_j^k$ but with $\omega^{|k|}$ replaced by $\omega^{s|k|}$. We have
 similar bounds
\begin{equation*}
\begin{aligned} &\sum\limits_{k\in \mathcal{K}}
\norm{\hat{f}^{k}(\hat{c}\zeta)}_1^2\leq \epsilon
\hat{P}(|||\hat{c}\zeta|||_1^2),\ \ \sum\limits_{|j|\leq M}
\norm{\hat{f}^{\pm\langle j\rangle }(\hat{c}\zeta)}_1^2\leq
\epsilon^3 \hat{P}_1(|||\hat{c}\zeta|||_1^2)
\end{aligned}
\end{equation*}
with other polynomials $\hat{P}$ and $\hat{P}_1$.

\subsection{Reformulation of the reverse Picard iteration}\label{subsec:ref
rev} Consider  $c\zeta = (c\zeta_j^k )\in \mathrm{H}^s$ with
$c\zeta_j^k=0$ for all $k \neq\pm \langle j\rangle$
 with $(j, k)\in\mathcal{R}_{\epsilon,h}.$
In the light of the two cases: $k=\pm \langle j\rangle$
 and $k
\neq\pm \langle j\rangle$, the components of $c\zeta$
 are split
into   $a\zeta = (a\zeta_j^k )\in \mathrm{H}^s$ and $b\zeta =
(b\zeta_j^k )\in \mathrm{H}^s$, respectively:
\begin{equation*}
\begin{aligned} &a\zeta_j^k=c\zeta_j^k\qquad \textmd{if}\ k=\pm \langle
j\rangle, \quad \textmd{and 0 else},\\
&b\zeta_j^k=c\zeta_j^k\qquad \textmd{if}\  \eqref{inequa}\
\textmd{is satisfied}, \quad \textmd{and 0 else}.
\end{aligned}
\end{equation*}
It is noted that    $a\zeta + b\zeta = c\zeta$ and
$|||a\zeta|||_s^2+|||b\zeta|||_s^2=|||c\zeta|||_s^2$.

Define differential operators $A, B$  respectively as
\begin{equation*}
\begin{aligned} (Aa\zeta)_j^{\pm \langle j\rangle}=&\frac{1}{\pm2 \textmd{i}
\epsilon h^2   \omega_j}\big(\epsilon^2 h^3 \omega_j\cot(h\omega_j)
a\ddot{\zeta}_j^{\pm  \langle
j\rangle}+\cdots\big), \\
(Bb\zeta)_j^{k}=&\frac{1}{4h\omega_j\csc(h\omega_j) s_{\langle
j\rangle+k}s_{\langle j\rangle-k}}\big(2\textmd{i} \epsilon
h^2\omega_j\csc(h\omega_j) s_{2k} b\dot{\zeta}_j^k +\cdots\big)
\\
&\qquad \qquad \qquad \qquad\qquad \qquad \textmd{for} \  (j, k) \
\textmd{satisfying }\eqref{inequa}.
\end{aligned}
\end{equation*}
According to the nonlinear  function $\textbf{f}$ of the preceding
subsection, we define the functions $F =(F ^k_ j)$ and $G = (G ^k_
j)$ with non-vanishing entries for $(j, k)$ satisfying
\eqref{inequa}:
\begin{equation*}
\begin{aligned} &F_j^{\pm \langle
j\rangle}(a\zeta,b\zeta)=\frac{1}{\pm \textmd{i}\epsilon}f_j^{\pm
\langle j\rangle}\big(c\zeta\big),\ \
  G_j^{k}(a\zeta,b\zeta)=-\frac{h^2(\omega_j+|k\cdot \omega|)}{4s_{\langle j\rangle+k}s_{\langle
j\rangle-k}}f_j^{k}\big(c\zeta\big).
\end{aligned}
\end{equation*}
Furthermore, let
\begin{equation*}
\begin{aligned} &(\Omega \zeta)_j^k=(\omega_j+|k\cdot \omega|) \zeta_j^k,\quad  (\Lambda \zeta)_j^k=\sinc (h\omega_j) \zeta_j^k.
\end{aligned}
\end{equation*}
The iterations \eqref{Pic ite j} and \eqref{Pic ite notj} then have
the form
\begin{equation}\label{Abstruct Pic ite}
\begin{aligned} &a\dot{\zeta}^{(n+1)}=\Omega^{-1}F(a\zeta^{(n)},b\zeta^{(n)})-Aa\zeta^{(n)}, \ \
 b\zeta^{(n+1)}=\Omega^{-1}\Lambda G(a\zeta^{(n)},b\zeta^{(n)})-Bb\zeta^{(n)}.
\end{aligned}
\end{equation}

From the second formula of \eqref{bounds f}, it follows that
$|||F|||_s\leq C\epsilon^{1/2}$. Moreover,  according to
\eqref{further-non-res cond}, we have $|||\Omega^{-1}F|||_s\leq C$.
 By \eqref{inequa},   one gets similar    bounds $|||G|||_s\leq C$,
  which is  valid  uniformly in $\epsilon, h, M$ on bounded subsets of
$\mathrm{H}^s$. For the derivatives of $F$ and $G$, the analogous
bounds hold. Using the conditions  \eqref{further-non-res cond} and
\eqref{inequa}, we obtain
\begin{equation*}
\begin{aligned} &\abs{\frac{\epsilon^2 h^3 \omega_j\cot(h\omega_j)}{\pm2 \textmd{i}
\epsilon h^2   \omega_j} }=\abs{\frac{\epsilon  h
 \cos(h\omega_j)}{2\sin(h\omega_j)} }\leq C\abs{\frac{\epsilon  h
  }{h\epsilon^{\frac{1}{2}}} }=  C\epsilon^{\frac{1}{2}},\\
&\abs{\frac{2\textmd{i}  \epsilon h^2\omega_j\csc(h\omega_j)
s_{2k}}{4h\omega_j\csc(h\omega_j) s_{\langle j\rangle+k}s_{\langle
j\rangle-k}}}=\frac{  \epsilon h \abs{\sin(h(k
\cdot\omega))}}{2\abs{s_{\langle j\rangle+k}s_{\langle
j\rangle-k}}}\leq C\frac{  \epsilon h^2 \abs{k
\cdot\omega}}{2\epsilon^{1/2}h^2(
 \omega_j+|k \cdot \omega|)}\leq  C\epsilon^{\frac{1}{2}}.
\end{aligned}
\end{equation*}
Therefore, the operators $A$ and $B$ are bounded as:
\begin{equation}\label{AB bound}
\begin{aligned} &|||(Aa\zeta)(\tau)|||_s\leq C
\sum\limits_{l=2}^Nh^{l-2}\epsilon^{l-3/2}|||\frac{d^l}{d\tau^l}(a\zeta)(\tau)|||_s,\\
&|||(Bb\zeta)(\tau)|||_s\leq C
\sum\limits_{l=1}^Nh^{l-1}\epsilon^{l-1/2}|||\frac{d^l}{d\tau^l}(b\zeta)(\tau)|||_s.
\end{aligned}
\end{equation}

By the definitions of  $a\zeta$, the initial value condition
\eqref{pica initial pl} can be rewritten as
\begin{equation}\label{abs ini con}
\begin{aligned} &a\zeta^{(n+1)}(0)=v+Pb\zeta^{(n)}(0)+Qa\zeta^{(n)}(0),
\end{aligned}
\end{equation}
where $v$ is defined by:
\begin{equation*}
\begin{aligned} & v_j^{\pm\langle j\rangle
}=\frac{\omega_j}{\epsilon}\Big(\frac{1}{2}q_j^0\mp\frac{\textmd{i}}{2\omega_j}p_j^0\Big).
\end{aligned}
\end{equation*}
(From \eqref{initi cond}, it follows that   $v$ is bounded in
$\mathrm{H}^s$.) The operators $P$ and $Q$   are defined by
\begin{equation*}
\begin{aligned}
(Pb\zeta)_j^{\pm\langle j\rangle}(0)=&-\frac{\omega_j}{2\epsilon
}\sum\limits_{k\neq \pm\langle j\rangle}\big(1\pm \frac{1}{\textmd{i} h \omega_j}L_1^k\big)\frac{\epsilon^{[[k]]}}{\omega^{|k|}}b\zeta_j^k(0),\\
 (Qa\zeta)_j^{\pm\langle
j\rangle}(0)=&\pm\frac{1}{\textmd{i}   \omega_j}
\Big(2e^{\textmd{i}(2c_1-1)h\omega_j} \epsilon
h^2\omega_j\csc(h\omega_j)a\dot{\zeta}_j^{\pm\langle
j\rangle}(0)+\cdots\Big).
\end{aligned}
\end{equation*}
The bound of $Q$ is obvious
\begin{equation}\label{Q bound}
\begin{aligned}
&|||(Qa\zeta)(0)|||_s\leq C
\sum\limits_{l=1}^Nh^{l}\epsilon^{l-1/2}|||\frac{d^l}{d\tau^l}(a\zeta)(0)|||_s.
\end{aligned}
\end{equation}
For the bound of $P$, we have
\begin{equation*}
\begin{aligned}
|||(Pb\zeta)(0)|||^2_s=&\sum\limits_{\norm{\langle j\rangle} \leq K}
\sum\limits_{|j|\leq M}''\omega_j^{2s}
\abs{\frac{\omega_j}{2\epsilon }\sum\limits_{k\neq \pm\langle
j\rangle}\big(1\pm \frac{1}{\textmd{i} h
\omega_j}L_1^k\big)\frac{\epsilon^{[[k]]}}{\omega^{|k|}}b\zeta_j^k(0)}^2\\
\leq&C\frac{1}{4\epsilon^2 } \sum\limits_{|j|\leq
M}''\omega_j^{2s+2} \sum\limits_{k\neq \pm\langle
j\rangle}\abs{\frac{\epsilon^{[[k]]}}{\omega^{|k|}}b\zeta_j^k(0)}^2\\
\leq&C\frac{1}{4\epsilon^2 } \sum\limits_{|j|\leq
M}''\omega_j^{2s+2}\sum\limits_{k\neq \pm\langle j\rangle}
\frac{\epsilon^{2[[k]]}}{\omega^{2|k|}}  \sum\limits_{k\neq
\pm\langle j\rangle}\abs{b\zeta_j^k(0)}^2\\
\leq&C \sum\limits_{k\neq \pm\langle j\rangle}
\frac{\epsilon^{2[[k]]}}{4\epsilon^2 }  \omega^{-2|k|}
|||(b\zeta)(0)|||^2_{s+1}\\
\leq&C  |||(b\zeta)(0)|||^2_{s+1},
\end{aligned}
\end{equation*}
where we used  the Cauchy-Schwarz inequality as well as  the bound
(see Lemma 2 of \cite{Cohen08}) $\sum\limits_{\norm{k}\leq
K}\omega^{-2|k|}\leq C.$
  The starting iterates of \eqref{abs ini con}
are chosen as $a\zeta^{(0)}(\tau ) = v$ and $b\zeta^{(0)}(\tau ) =
0$.

\subsection{Bounds of the coefficient functions}\label{subsec: bounds}
By  the non-resonance conditions \eqref{inequa}
 and \eqref{further-non-res cond}, it is obtained by induction that
the iterates $a\zeta^{(n)},\ b\zeta^{(n)}$  and their derivatives
with respect to the slow time $\tau = \epsilon t $ are bounded in
$\mathrm{H}^s$  for $0 \leq \tau\leq 1$ and $n \leq 4N$. The
$(4N)$-th iterates satisfy
\begin{equation}\label{Bounds func zeta}
\begin{array}{ll}   |||a\zeta(0)|||_s\leq C,\ \ &|||\Omega a\dot{\zeta}(\tau)|||_s\leq
C\epsilon^{1/2},\\
|||\Lambda^{-1}a\dot{\zeta}(\tau)|||_s\leq C,\ \
&|||\Lambda^{-1}\Omega b \zeta (\tau)|||_s\leq C,
\end{array}
\end{equation}
where $C$ is independent of $\epsilon, h, M$, but depends on $N$.
The analogous bounds for higher derivatives of these functions with
respect to $\tau$ can also be obtained. These bounds
  together imply the bounds \eqref{bound modu func}.
According to these bounds, it is also true that $|||c\zeta
(\tau)-a\zeta(0)|||_{s+1}\leq C$. By Sect. 3.7 of \cite{Cohen08},
Sect. 6.6 of \cite{Cohen08-1} and \eqref{AB bound}, it can be
confirmed that the bound \eqref{bound TMFE} is true.

From \eqref{bounds f} and \eqref{Abstruct Pic ite}, it follows that
$\big(\sum\limits_{\norm{k}=1}\norm{(\Lambda^{-1}\Omega b
\zeta)^k}_s^2\big)^{1/2}\leq C \epsilon$ for $b \zeta=(b
\zeta)^{(4N)}$. Moreover, in the light of \eqref{another-non-res
cond}, one arrives that $$ \sum\limits_{|j|\leq
M}\sum\limits_{j_1+j_2=j}\sum\limits_{k=\pm\langle
j_1\rangle\pm\langle j_2\rangle}
 \omega_j^{2(s+1)}|b\zeta_j^k|^2 \leq C \epsilon.$$
  These bounds  as well as   \eqref{abs ini con} yield
the   formula of \eqref{jth TMFE}.

The same bounds can be obtained for the alternative scaling
\eqref{diff resca}:
\begin{equation}\label{Bounds  zeta sca}
\begin{aligned} & |||\hat{a}\zeta(0)|||_1\leq C,\ \ |||\Omega \hat{a}\dot{\zeta}(\tau)|||_1\leq
C\epsilon^{1/2},\ \  |||\Lambda^{-1}\Omega \hat{b} \zeta
(\tau)|||_1\leq C .
\end{aligned}
\end{equation}
Moreover,  the following bounds hold for this scaling:
\begin{equation}\label{Bounds  eta sca}
\begin{aligned}
&\big(\sum\limits_{\norm{k}=1}\norm{(\Lambda^{-1}\Omega \hat{b}
\zeta)^k}_1^2\big)^{1/2}\leq C \epsilon.
\end{aligned}
\end{equation}

\subsection{Defect}\label{subsec:def}
The defect in \eqref{methods} can be expressed  as
\begin{equation*}
\begin{aligned}
&\delta_j(t)=\sum\limits_{\norm{k}\leq NK}d^k(\epsilon t)
e^{\textmd{i}(k\cdot\omega) t}+R_{N+1}(t),
\end{aligned}
\end{equation*}
where
\begin{equation}\label{djk}
\begin{aligned}
d_j^k=&\frac{1}{h^2}\tilde{L}^k \zeta_j^k+\sum\limits_{m=
2}^N\frac{g^{(m)}(0)}{m!}
\sum\limits_{k^1+\cdots+k^m=k}\sum\limits_{j_1+\cdots+j_m\equiv j\
\textmd{mod}\ 2M}'  \big(\zeta_{j_1}^{k^1}\cdot\ldots\cdot
\zeta_{j_m}^{k^m}\big).
\end{aligned}
\end{equation}
Here $\zeta^k_j=(\zeta^k_j)^{(4N)}$  are obtained after $4N$
iterations of the procedure in Subsection \ref{subsec interation}.
We consider   $\norm{k}\leq NK$, and $\zeta_j^k=0$ for
$\norm{k}>K:=2N$. We use  $\tilde{L}^k$ to denote  the truncation of
the  operator $L^k$ after the $\epsilon^N$ term. The function
$R_{N+1}$  contains the remainder terms of the Taylor expansion of
$f$ after $N$ terms. We obtain
 $\norm{R_{N+1}}_{s+1} \leq
C\epsilon^{N+1}$  by the bound \eqref{bound TMFE} for the remainder
in the Taylor expansion of $f$ and the estimate \eqref{Bounds func
zeta}  for the $(N + 1)$-th derivative for $\zeta_j^k$.

 According to the analysis given in Sect. 3.8 of \cite{Cohen08} and Sect. 6.7 of  \cite{Cohen08-1},
 it is   obtained that
\begin{equation*}
\begin{aligned}
&\norm{\sum\limits_{\norm{k}\leq NK}d^k(\epsilon t)
e^{\textmd{i}(k\cdot\omega) t}}_{s}^2\leq C
\sum\limits_{\norm{k}\leq NK}\norm{\omega^{|k|}d^k(\epsilon
t)}_{s}^2.
\end{aligned}
\end{equation*}
In what follows,  we prove the bounds
\begin{equation*}
\begin{aligned}
& \sum\limits_{\norm{k}\leq NK}\norm{\omega^{|k|}d^k(\epsilon
t)}_{s}^2\leq C\epsilon^{2(N+1)}
\end{aligned}
\end{equation*}
for three cases: truncated, near-resonant and non-resonant modes,
which will be shown by the following three parts.
\subsubsection{Truncated  mode}
 It is
known that we set $\zeta_j^k=0$ for $\norm{k}>K:=2N$ (truncated
modes) and for $(j, k) \in\mathcal{R}_{\epsilon,h}$ (near-resonance
modes). Thence for these  both cases, the defect has the form
\begin{equation*}
\begin{aligned}
d_j^k=&\sum\limits_{m= 2}^N\frac{g^{(m)}(0)}{m!}
\sum\limits_{k^1+\cdots+k^m=k}\sum\limits_{j_1+\cdots+j_m\equiv j\
\textmd{mod}\ 2M}' \big(\zeta_{j_1}^{k^1}\cdot\ldots\cdot
\zeta_{j_m}^{k^m}\big).
\end{aligned}
\end{equation*}
For truncated modes we express the defect as
\begin{equation*}
\begin{aligned}
&d_j^k=\epsilon^{[[k]]}\omega^{-|k|} f_j^{k}\big(c\zeta\big).
\end{aligned}
\end{equation*}
On the basis of  \eqref{bounds f} and \eqref{Bounds func zeta}
   with $2N$ instead  of $K$, we have the
bound $|||f|||_s^2\leq C \epsilon$, which yields
\begin{equation*}
\begin{aligned}
&\sum\limits_{\norm{k}>K}\sum\limits_{|j|\leq
M}'\omega_j^{2s}|\omega^{|k|}d_j^k|^2\leq
\sum\limits_{\norm{k}>K}\sum\limits_{|j|\leq
M}'\omega_j^{2s}|f_j^k|^2\epsilon^{2[[k]]}\leq C\epsilon^{2(N+1)},
\end{aligned}
\end{equation*}
where we have used  the fact that $2[[k]]= \norm{k} + 1 \geq K + 2 =
2(N + 1).$

\subsubsection{Near-resonant mode} For the near-resonant modes we express the  defect by the rescaling
\eqref{diff resca} as follows
\begin{equation*}
\begin{aligned}
d_j^k=& \epsilon^{[[k]]}\omega^{-s|k|}
\hat{f}_j^{k}\big(\hat{c}\zeta\big).
\end{aligned}
\end{equation*}
Therefore, it is obtained that
\begin{equation*}
\begin{aligned}
 \sum\limits_{(j,k)\in
\mathcal{R}_{\epsilon,h}}\omega_j^{2s}|\omega^{|k|}d_j^k|^2&=\sum\limits_{(j,k)\in
\mathcal{R}_{\epsilon,h}}\frac{\omega_j^{2(s-1)}}{\omega^{2(s-1)|k|}}\epsilon^{2[[k]]}\omega_j^2|\hat{f}_j^k|^2\\
&\leq C\sup_{(j,k)\in
\mathcal{R}_{\epsilon,h}}\frac{\omega_j^{2(s-1)}}{\omega^{2(s-1)|k|}}\epsilon^{2[[k]]+1}\leq
C\epsilon^{2(N+1)}
\end{aligned}
\end{equation*}
by considering  $|||\hat{f}|||_1^2\leq C \epsilon$ and the
non-resonance condition  \eqref{non-resonance cond}.

\subsubsection{Non-resonant mode} Now consider the
non-resonant mode, which means that $\norm{k}\leq K$ and that $(j,
k)$ satisfies the non-resonance condition \eqref{inequa}. For this
case, the defect is formulated   in the scaled variables of
Subsection \ref{subsec rescal}  as
\begin{equation*}
\begin{aligned}
\omega^{|k|}d_j^k=& \epsilon^{[[k]]}
\Big(\frac{1}{h^2\bar{b}_1(h\omega_j)}\tilde{L}_j^k
c\zeta_j^k+f_j^{k}\big(c\zeta \big)\Big).
\end{aligned}
\end{equation*}
Split them into  $k={\pm\langle j\rangle}$ and $k\neq{\pm\langle
j\rangle}$ and we obtain
\begin{equation*}
\begin{array}{ll}
\omega_jd_j^{\pm\langle j\rangle}= \epsilon \Big( \frac{\pm2
\textmd{i} \epsilon h^2   \omega_j}{h^2}
\big(a\dot{\zeta}_j^{\pm\langle j\rangle}+(Aa\zeta)_j^{\pm\langle
j\rangle}\big)+f_j^{\pm\langle j\rangle}\big(c\zeta \big)\Big),\\
\omega^{|k|}d_j^k=\epsilon^{[[k]]}
\Big(\frac{4h\omega_j\csc(h\omega_j) s_{\langle
j\rangle+k}s_{\langle
j\rangle-k}}{h^2}\big(b\zeta_j^k+(Bb\zeta)_j^k\big)+f_j^{k}\big(c\zeta
\big)\Big).
\end{array}
\end{equation*}
We remark that the functions here are actually the $4N$-th iterates
of the iteration in Subsection   \ref{subsec interation}. Expressing
$f_j^{\pm\langle j\rangle}\big(c\zeta(t\epsilon)\big)$ and
$f_j^{k}\big(c\zeta(t\epsilon)\big)$ in terms of $F$ and $G$ and
inserting them from \eqref{Abstruct Pic ite} into this defect, we
obtain
\begin{equation*}
\begin{array}{ll}
\omega_jd_j^{\pm\langle j\rangle}= 2 \omega_j \alpha_j^{\pm\langle
j\rangle}\big( \big[a\dot{\zeta}_j^{\pm\langle
j\rangle}\big]^{(4N)}-\big[a\dot{\zeta}_j^{\pm\langle
j\rangle}\big]^{(4N+1)}\big), \ \ \ & \alpha_j^{\pm\langle
j\rangle}=\pm  \textmd{i}\epsilon^2,\\
\omega^{|k|}d_j^k= \beta_j^k\big([b\zeta_j^{k}]^{(4N)}-
[b\zeta_j^{k} ]^{(4N+1)}\big), \ \ \ &
\beta_j^{k}=\epsilon^{[[k]]}\frac{4h\omega_j\csc(h\omega_j)
s_{\langle j\rangle+k}s_{\langle j\rangle-k}}{h^2}.
\end{array}
\end{equation*}
It is noted that these expressions are very similar to those in
Sect. 6.9 of \cite{Cohen08-1}. Thus, following the analysis given in
 \cite{Cohen08-1}, for $\tau\leq1,$ we have
\begin{equation}\label{right hand esti non}
\begin{aligned}
&\Big( \sum\limits_{\norm{k}\leq
K}\norm{\omega^{|k|}d^k(\tau)}_{s}^2\Big)^{1/2}\leq C\epsilon^{N+1}.
\end{aligned}
\end{equation}

Then the defect   $\delta(t)$ becomes $
 \norm{\delta(t)}_{s}\leq C\epsilon^{N+1}$   for $t\leq
\epsilon^{-1}.$ Considering  the defect in the initial conditions
\eqref{initial pl} and \eqref{initial mi}, one  has $$
\norm{q^0-\tilde{q}(0)}_{s+1}+\norm{p^0-\tilde{p}(0)}_{s}\leq
C\epsilon^{N+1}.$$ For the alternative scaling \eqref{diff resca},
it is true that \begin{equation}\label{alter esti non}
\begin{aligned}
&\Big( \sum\limits_{\norm{k}\leq
K}\norm{\omega^{s|k|}d^k(\tau)}_{1}^2\Big)^{1/2}\leq
C\epsilon^{N+1}.
\end{aligned}
\end{equation}

\subsection{Remainders}\label{subsec:rem}
Let $\Delta q^n=\tilde{q}(t_n)-q^n,\ \ \Delta
p^n=\tilde{p}(t_n)-p^n$. We then have
\begin{equation*}
\begin{aligned}
\left(
  \begin{array}{c}
    \Delta q^{n+1} \\
    \Omega^{-1}\Delta p^{n+1} \\
  \end{array}
\right)=\left(
          \begin{array}{cc}
            \cos(h\Omega) & \sin(h\Omega) \\
            -\sin(h\Omega) & \cos(h\Omega) \\
          \end{array}
        \right)\left(
  \begin{array}{c}
    \Delta q^n \\
    \Omega^{-1}\Delta p^n \\
  \end{array}
\right)+h\left(
          \begin{array}{c}
           h\Omega\bar{b}_1(h\Omega)\Omega^{-1}( \Delta f +\delta)\\
            b_1(h\Omega) \Omega^{-1} (\Delta f + \delta) \\
          \end{array}
        \right),
\end{aligned}
\end{equation*}
where $\Delta f=\big(f(\tilde{q}_{h}(t_n)-f(Q^{n+c_1})\big).$ It
follows from  the Lipschitz bound (Sect. 4.2 in \cite{Hairer08} and
Sect. 6.10 in \cite{Cohen08-1}) that
\begin{equation*}
\begin{aligned}
\norm{\Omega^{-1}\Delta f}_{s+1}=\norm{\Delta f}_{s}\leq \epsilon
\norm{\tilde{q}_{h}(t_n)- Q^{n+c_1 }}_{s}\leq \epsilon (\norm{\Delta
q^{n}}_{s}+\norm{\Delta p^{n}}_{s-1}).
\end{aligned}
\end{equation*}
where the first formula of \eqref{methods} and \eqref{MFE-2} are
considered here. Moreover, we have
$\norm{\Omega^{-1}\delta(t)}_{s+1}=\norm{\delta(t)}_{s}\leq
C\epsilon^{N+1}$. Therefore, we obtain
\begin{equation*}
\begin{aligned}
\norm{\left(
  \begin{array}{c}
    \Delta q^{n+1} \\
    \Omega^{-1}\Delta p^{n+1} \\
  \end{array}
\right) }_{s+1}\leq\norm{ \left(
  \begin{array}{c}
    \Delta q^{n} \\
    \Omega^{-1}\Delta p^{n} \\
  \end{array}
\right)}_{s+1}+h\Big(C\epsilon \norm{ \Delta q^{n}}_{s+1}+C\epsilon
\norm{\Delta p^{n}}_{s+1}+C\epsilon^{N+1}\Big).
\end{aligned}
\end{equation*}
Solving this inequality leads to $ \norm{ \Delta
q^{n}}_{s+1}+\norm{\Omega^{-1} \Delta p^{n}}_{s+1}\leq
C(1+t_n)\epsilon^{N+1} $ for $t_n\leq\epsilon^{-1}.$ This proves
\eqref{error MFE} and completes the proof of Theorem \ref{MFE thm}.

\subsection{Three almost-invariants} \label{subsec:alm inv}

 In
this section, we will    show that the modulated Fourier expansion
\eqref{MFE-ERKN} has three almost-invariants which are closed to the
actions, the momentum and the total energy of \eqref{prob}.


According to the proof of   Theorem \ref{MFE thm},  the defect
formula \eqref{djk} can be reformulated as
\begin{equation}\label{djk pote}
\begin{aligned}
&\frac{1}{h^2}\tilde{L}^k
\zeta_j^k+\nabla_{-j}^{-k}\mathcal{U}(\zeta)=d_j^k,
\end{aligned}
\end{equation}
where $\nabla_{-j}^{-k}\mathcal{U}(y)$ is the partial derivative
with respect to $y_{-j}^{-k}$ of the extended potential
(\cite{Cohen08-1,Hairer08})
\begin{equation*}
\begin{aligned}
&\mathcal{U}(\zeta)=\sum\limits_{l=-N}^N\mathcal{U}_l(\zeta),\\
&\mathcal{U}_l(\zeta)=\sum\limits_{m= 2}^N
\frac{U^{(m+1)}(0)}{(m+1)!}
\sum\limits_{k^1+\cdots+k^{m+1}=0}\sum\limits_{j_1+\cdots+j_{m+1}=2Ml}'
\big(\zeta_{j_1}^{k^1}\cdot\ldots\cdot
\zeta_{j_{m+1}}^{k^{m+1}}\big).
\end{aligned}
\end{equation*}
Here $U$ is the potential appearing in \eqref{H} and $\norm{k^i}\leq
2N$ and $| j_i| \leq M$.

 Define
$S_{\mu}(\theta)y=\big(\mathrm{e}^{\mathrm{i}(k \cdot \mu)
\theta}y_j^k\big)_{| j| \leq M, \norm{k}\leq K}$ and
$T(\theta)y=\big(\mathrm{e}^{\mathrm{i}j \theta}y_j^k\big)_{| j|
\leq M, \norm{k}\leq K},$ where $\mu = (\mu_l)_{l\geq0}$ is an
arbitrary real sequence  for $\theta\in R.$ It is easy to see that
the  summation  in the definition of $\mathcal{U}$ is over $k^1
+\ldots+k^{m+1} = 0$ and that in $\mathcal{U}_0$ over
$j_1+\cdots+j_{m+1} = 0$, which yield  $
\mathcal{U}(S_{\mu}(\theta)y)=\mathcal{U}(y)$ and
$\mathcal{U}_0(T(\theta)y)=\mathcal{U}_0(y)$ for $\theta\in
\mathbb{R}$ (see \cite{Cohen08-1}). Therefore, we have
\begin{equation}\label{du}
\begin{aligned}
0=\frac{d}{d\theta}\mid_{\theta=0}\mathcal{U}(S_{\mu}(\theta)\zeta),\
\ \ \ 0=\frac{d}{d\theta}\mid_{\theta=0}
\mathcal{U}_0(T(\theta)\zeta).
\end{aligned}
\end{equation}

By the first formula of \eqref{du}, we obtain
\begin{equation*}
\begin{aligned}
&0=\frac{d}{d\theta}\mid_{\theta=0}\mathcal{U}(S_{\mu}(\theta)\zeta)
= \sum\limits_{\norm{k}\leq K}\sum\limits_{|j|\leq M}' \mathrm{i}(k
\cdot \mu)   \zeta_{-j}^{-k} \nabla_{-j}^{-k}\mathcal{U}(\zeta) .
\end{aligned}
\end{equation*}
 According to \eqref{djk pote} and the above formula,
we have
\begin{equation}
\begin{aligned}
& \sum\limits_{\norm{k}\leq K}\sum\limits_{|j|\leq M}' \mathrm{i}(k
\cdot \mu)   \zeta_{-j}^{-k} \frac{1}{h^2}\tilde{L}^k \zeta_j^k =
\sum\limits_{\norm{k}\leq K}\sum\limits_{|j|\leq M}' \mathrm{i}(k
\cdot \mu)   \zeta_{-j}^{-k} d_j^k.
\end{aligned}
\label{almost 1-2}%
\end{equation}
By the expansion \eqref{L expansion} of the operator $L^k$ and the
analysis given in Sect. 7.3 of \cite{Cohen08-1}, we know that
$\zeta_{-j}^{-k} \tilde{L}_j^k \zeta_j^k$  is a total derivative.
 Therefore, the left-hand side of \eqref{almost 1-2} is a total derivative of
function $\epsilon \mathcal{J}_{\mu}[\zeta]$ which depends on
$\zeta(\tau), \eta(\tau)$ and their up to $N-1$th order derivatives.
In this way, \eqref{almost 1-2} becomes
\begin{equation}
\begin{aligned}
& -\epsilon\frac{d}{d \tau} \mathcal{J}_{\mu}[\zeta]=
\sum\limits_{\norm{k}\leq K}\sum\limits_{|j|\leq M}' \mathrm{i}(k
\cdot \mu)   \zeta_{-j}^{-k} d_j^k.
\end{aligned}
\label{almost 1-3}%
\end{equation}

By the second formula of \eqref{du} and in a similar way, we obtain
\begin{equation}
\begin{aligned}
& \sum\limits_{\norm{k}\leq K}\sum\limits_{|j|\leq M}' \mathrm{i}j
  \frac{1}{h^2}\zeta_{-j}^{-k}
 \tilde{L}^k
\zeta_j^k  =   \sum\limits_{\norm{k}\leq K}\sum\limits_{|j|\leq M}'
\mathrm{i}j  \zeta_{-j}^{-k} \Big(d_j^k-
\sum\limits_{l\neq0}\nabla_{-j}^{-k} \mathcal{U}_l(\zeta) \Big) .
\end{aligned}
\label{almost 2-2}%
\end{equation}
The left-hand side of this can be written as a total derivative of
function  $\epsilon \mathcal{K}[\zeta](\tau)$ and \eqref{almost 2-2}
becomes
\begin{equation}
\begin{aligned}
& -\epsilon\frac{d}{d \tau} \mathcal{K}[\zeta]  =
\sum\limits_{\norm{k}\leq K}\sum\limits_{|j|\leq M}' \mathrm{i}j
\zeta_{-j}^{-k}\Big(d_j^k- \sum\limits_{l\neq0}\nabla_{-j}^{-k}
\mathcal{U}_l(\zeta) \Big).
\end{aligned}
\label{almost 2-3}%
\end{equation}
Another almost-invariant is obtained by considering
\begin{equation*}
\begin{aligned}
& \epsilon\frac{d}{d\tau} \mathcal{U}(\zeta)
=\sum\limits_{\norm{k}\leq K}\sum\limits_{|j|\leq M}'
\dot{\zeta}_{-j}^{-k} \nabla_{-j}^{-k}\mathcal{U}(\zeta)\\
=&  \sum\limits_{\norm{k}\leq K}\sum\limits_{|j|\leq M}' \big
(\textmd{i}(k\cdot\omega)\zeta_{-j}^{-k}+
 \epsilon\dot{\zeta}_{-j}^{-k} \big)
\big (-\frac{1}{h^2}\tilde{L}^k \zeta_j^k+d_j^k\big).
\end{aligned}
\end{equation*}
Similarly to the   analysis in Sect. 7.3 of \cite{Cohen08-1}, we
know that there is a function $\epsilon \mathcal{H}[\zeta](\tau)$
such that
\begin{equation}
\begin{aligned}
-\epsilon\frac{d}{d \tau} \mathcal{H}[\zeta](\tau)=&
\sum\limits_{\norm{k}\leq K}\sum\limits_{|j|\leq M}' \big
(\textmd{i}(k\cdot\omega)\zeta_{-j}^{-k}+
 \epsilon\dot{\zeta}_{-j}^{-k} \big) d_j^k.
\end{aligned}
\label{almost 3-2}%
\end{equation}

Consider the special case of $\mu=\langle l\rangle$ for
$\mathcal{J}_{\mu}$ and the following result is obtained on noticing
the smallness of the right-hand sides in \eqref{almost 1-3},
\eqref{almost 2-3}  and \eqref{almost 3-2}.

\begin{theo}\label{invariant12 thm}
Under the conditions of Theorem \ref{MFE thm},  for $\tau\leq 1,$ it
is true that
\begin{equation*}
\begin{aligned}
&\sum\limits_{l=1}^M \omega_l^{2s+1} \left|\frac{d}{d \tau}
\mathcal{J}_{l}[\zeta](\tau)\right|\leq C\epsilon^{N+1},\\
&\left|\frac{d}{d \tau} \mathcal{K}[\zeta](\tau)\right|\leq
C(\epsilon^{N+1}+\epsilon^{2}M^{-s+1}),\\ & \left|\frac{d}{d \tau}
\mathcal{H}[\zeta](\tau)\right|\leq C\epsilon^{N+1}.
\end{aligned}
\end{equation*}
\end{theo}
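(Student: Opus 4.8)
The plan is to establish Theorem~\ref{invariant12 thm} by bounding, in the appropriate weighted norms, the right-hand sides of the three identities \eqref{almost 1-3}, \eqref{almost 2-3} and \eqref{almost 3-2}, which were derived in Section~\ref{subsec:alm inv}. In each case the right-hand side is a sum of the form $\sum_{\norm{k}\leq K}\sum_{|j|\leq M}{}'(\text{coefficient})\,\zeta_{-j}^{-k}\,d_j^k$ (or with $\dot{\zeta}_{-j}^{-k}$ in place of $\zeta_{-j}^{-k}$), where the $d_j^k$ are the defects analysed in Section~\ref{subsec:def}. The key inputs are therefore the defect estimates \eqref{right hand esti non} and \eqref{alter esti non}, together with the bounds \eqref{bound modu func}, \eqref{Bounds func zeta} and \eqref{Bounds zeta sca} on the modulation functions $\zeta^k$ and their derivatives. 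First I would treat the energy almost-invariant $\mathcal{H}[\zeta]$: apply the Cauchy--Schwarz inequality to $\sum_{k}\sum_j{}'\big(\textmd{i}(k\cdot\omega)\zeta_{-j}^{-k}+\epsilon\dot\zeta_{-j}^{-k}\big)d_j^k$, pairing the factor $\omega^{|k|}d_j^k$ (controlled by $C\epsilon^{N+1}$ from \eqref{right hand esti non} in the $H^s$ norm) against $\omega^{-|k|}\big(\textmd{i}(k\cdot\omega)\zeta_{-j}^{-k}+\epsilon\dot\zeta_{-j}^{-k}\big)$, whose $H^s$-size is $O(1)$ by \eqref{bound modu func} and the derivative bounds. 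Summing a geometric-type series in $\norm{k}$ (using $\sum_{\norm{k}\leq K}\omega^{-2|k|}\leq C$, Lemma~2 of \cite{Cohen08}) then yields $\big|\tfrac{d}{d\tau}\mathcal{H}[\zeta](\tau)\big|\leq C\epsilon^{N+1}$.

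For the actions, the idea is the same but with the weight $\omega_l^{2s+1}$ built in: for fixed $l$, the right-hand side of \eqref{almost 1-3} with $\mu=\langle l\rangle$ only involves $k$ with $k_l\neq 0$, and a Cauchy--Schwarz split of $\sum_{k}\sum_j{}'\,\textmd{i}k_l\,\zeta_{-j}^{-k}d_j^k$ against the same $\omega^{|k|}d_j^k$ factor, followed by summation over $l$ with the weight $\omega_l^{2s+1}$, gives $\sum_{l=1}^M\omega_l^{2s+1}\big|\tfrac{d}{d\tau}\mathcal{J}_l[\zeta](\tau)\big|\leq C\epsilon^{N+1}$; here one uses $s\geq\sigma+1$ and the alternative scaling \eqref{diff resca}--\eqref{alter esti non} to absorb the $\omega_l^{2s}$ weights cleanly on the near-resonant part, exactly as in Sect.~7.4 of \cite{Cohen08-1}. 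The momentum estimate \eqref{almost 2-3} is the one requiring extra care: besides the defect term $\textmd{i}j\,\zeta_{-j}^{-k}d_j^k$ (handled as above, producing $C\epsilon^{N+1}$), there is the genuinely different contribution $-\sum_{k}\sum_j{}'\textmd{i}j\,\zeta_{-j}^{-k}\sum_{l\neq 0}\nabla_{-j}^{-k}\mathcal{U}_l(\zeta)$ coming from the aliasing terms $\mathcal{U}_l$ with $l\neq 0$ (i.e.\ the failure of $\mathcal{U}(T(\theta)\zeta)=\mathcal{U}(\zeta)$ away from $\mathcal{U}_0$). For this one estimates $\mathcal{U}_l$ with $l\neq 0$ by exploiting that $j_1+\cdots+j_{m+1}=2Ml$ forces at least one index near $\pm M$, so each such term carries a factor $M^{-s}$ (or $M^{-s+1}$ after the extra $j\sim M$ weight), and the bounds \eqref{bound modu func} give an overall $\epsilon^2$; this accounts for the $\epsilon^2 M^{-s+1}$ term in the stated bound.

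The main obstacle I anticipate is precisely this aliasing term in the momentum identity: one must show carefully that the constraint $j_1+\cdots+j_{m+1}\equiv 0 \bmod 2M$ with a nonzero net multiple of $2M$ costs a power $M^{-s}$ in the weighted norm, which requires a somewhat delicate counting argument over the indices $j_1,\dots,j_{m+1}$ combined with the weights $\omega^{|k|}/\epsilon^{[[k]]}$ and the Cauchy--Schwarz bookkeeping; this is the analogue of Sect.~7.4--7.5 of \cite{Cohen08-1}, and the only genuinely new point for ERKN methods is checking that the truncated operator $\tilde L^k$ and the coefficients $s_{\langle j\rangle\pm k}$, $\bar b_1(h\Omega)$, $\csc(h\omega_j)$ appearing in \eqref{L expansion} do not spoil the ``total derivative'' structure used to pass from \eqref{almost 1-2} to \eqref{almost 1-3} (and likewise for \eqref{almost 2-2}, \eqref{almost 3-2}), which in turn relies on \eqref{further-non-res cond} and \eqref{inequa} to keep all the relevant denominators under control. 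Once the total-derivative functions $\mathcal{J}_\mu[\zeta]$, $\mathcal{K}[\zeta]$, $\mathcal{H}[\zeta]$ are in hand and the right-hand side bounds are established as above, the theorem follows immediately. Everything else is routine Cauchy--Schwarz and summation of the $\omega^{-2|k|}$ series, so I would not dwell on those computations.
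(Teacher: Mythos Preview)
Your proposal is correct and follows essentially the same approach as the paper: bounding the right-hand sides of \eqref{almost 1-3}, \eqref{almost 2-3} and \eqref{almost 3-2} via Cauchy--Schwarz, using the defect estimates \eqref{right hand esti non}, \eqref{alter esti non} together with the modulation-function bounds \eqref{bound modu func} and \eqref{Bounds zeta sca}, and handling the aliasing term $\sum_{l\neq 0}\nabla_{-j}^{-k}\mathcal{U}_l(\zeta)$ in the momentum identity to produce the $\epsilon^2 M^{-s+1}$ contribution. The paper's own proof is in fact even more terse---it simply refers to Theorem~3 of \cite{Cohen08}, Theorem~5.2 of \cite{Hairer08}, and Theorem~6 of \cite{Cohen08-1} for the three respective estimates---so your write-up supplies the same ingredients with more detail.
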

\begin{proof}  It follows from  the bounds \eqref{Bounds  zeta sca} and \eqref{alter
esti non} that  the first estimate  is obtained  by using a similar
proof to Theorem 3 in \cite{Cohen08}.   Based on the bounds
\eqref{bound modu func} and \eqref{right hand esti non}, the second
estimate can be proved as in Theorem 5.2 of \cite{Hairer08}. As in
Theorem 6 of \cite{Cohen08-1}, the third estimate   is got by the
Cauchy-Schwarz inequality and the estimates \eqref{bound modu func}
and \eqref{right hand esti non}.
\end{proof}

\subsection{Relationship with the quantities of \eqref{prob}} \label{subsec:rela}
Denote the harmonic action of the numerical solution by
$$J_{l} = I_l + I_{-l} = 2I_l\quad \textmd{for} \quad 0 < l <
M,\quad J_0 = I_0,\quad J_M = I_M.$$ It can be obtained that the
almost-invariants $\mathcal{J}_{l}$, $\mathcal{H}$ and $\mathcal{K}$
are close to the harmonic action $J_{l}$, the Hamiltonian $H_M$ and
the momentum $K$, respectively.

\begin{theo}\label{Relationship thm}
Under the conditions of Theorem \ref{main theo1}, along the
numerical solution $(q^n , p^n)$ of \eqref{methods} and the
associated modulation sequence $(\zeta(\epsilon t), \eta(\epsilon
t))$, we have
\begin{equation*}
\begin{aligned}
&  \mathcal{J}_{l}[\zeta](\epsilon t_n)=J_{l}(q^n ,
p^n)+\gamma_l(t_n)\epsilon^3,\\
&\mathcal{K}[\zeta](\epsilon t_n)=K(q^n ,
p^n)+\mathcal{O}(\epsilon^3)+\mathcal{O}(\epsilon^2 M^{-s}),\\
&\mathcal{H}[\zeta](\epsilon t_n)=H_M(q^n ,
p^n)+\mathcal{O}(\epsilon^3),
\end{aligned}
\end{equation*}
where all the constants are independent of $\epsilon, M, h$, and
$n$, and  $\sum\limits_{l=0}^M \omega_l^{2s+1} \gamma_l(t_n)\leq C$
for $t_n\leq \epsilon^{-1}.$
\end{theo}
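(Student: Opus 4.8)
The plan is to compare the almost-invariants $\mathcal{J}_{l}[\zeta]$, $\mathcal{K}[\zeta]$, $\mathcal{H}[\zeta]$ with $J_l(q^n,p^n)$, $K(q^n,p^n)$, $H_M(q^n,p^n)$ by first replacing $(q^n,p^n)$ by the modulated Fourier expansion $(\tilde{q}(t_n),\tilde{p}(t_n))$ — incurring only an $\mathcal{O}(\epsilon^N)$ error by \eqref{error MFE} — and then identifying the leading contributions in the modulation functions. For the harmonic actions, I would start from the definition $\mathcal{J}_{\langle l\rangle}[\zeta]$ obtained by integrating the left-hand side of \eqref{almost 1-2} with $\mu=\langle l\rangle$: its dominant term comes from the pair $k=\pm\langle l\rangle$, where by Proposition \ref{lhd pro} the operator $L^{\pm\langle l\rangle}(hD)$ contributes $\pm 2\mathrm{i}\epsilon h^2\omega_l\dot{\zeta}_l^{\pm\langle l\rangle}$, so that $\zeta_{-l}^{-\langle l\rangle}\tilde{L}^{\pm\langle l\rangle}\zeta_l^{\pm\langle l\rangle}$ integrates (after dividing by $h^2$) to something of the form $2\omega_l\,\mathrm{Im}(\bar{\zeta}_l^{\langle l\rangle}\zeta_l^{\langle l\rangle}\cdots)$ matching $\omega_l|\zeta_l^{\langle l\rangle}|^2+\omega_l|\zeta_l^{-\langle l\rangle}|^2$ up to derivative terms; using \eqref{impor-zetaeta} to express $\eta_l^{\pm\langle l\rangle}$ in terms of $\zeta_l^{\pm\langle l\rangle}$, this is exactly $I_l + I_{-l}$ evaluated on the leading part of the expansion. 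The remaining terms in $\mathcal{J}_{\langle l\rangle}$ — those with $k\neq\pm\langle l\rangle$ and the higher-derivative corrections — are then shown to be $\mathcal{O}(\epsilon^3)$ after weighting by $\omega_l^{2s+1}$, using the bounds \eqref{bound modu func} (or equivalently \eqref{Bounds func zeta}, \eqref{Bounds zeta sca}) together with $\sum_{\|k\|\le K}\omega^{-2|k|}\le C$, in the same manner as Theorem 4 of \cite{Cohen08}.

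For the energy, I would proceed analogously from \eqref{almost 3-2}: the function $\mathcal{H}[\zeta]$ is assembled so that its leading part reconstitutes $\frac12\sum'_{|j|\le M}(|p_j|^2+\omega_j^2|q_j|^2)$ from the $k=\pm\langle j\rangle$ contributions, while the potential part $\mathcal{U}(\zeta)$ reconstitutes $V(q)$ on the diagonal; one checks that $\mathcal{U}_0$ evaluated on $\tilde q(t_n)$ equals $V(q^n)$ up to $\mathcal{O}(\epsilon^3)$, since the off-diagonal modulation functions are at least $\mathcal{O}(\epsilon)$ smaller and $V$ is a polynomial-type function with the $\mathcal{O}(\epsilon^N)$-small corrections from \eqref{error MFE} negligible. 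The momentum case is the one where the extra error term $\mathcal{O}(\epsilon^2 M^{-s})$ enters: starting from \eqref{almost 2-2}, the factor $\mathrm{i}j$ weights high frequencies, so that the contribution of the tail $|j|$ close to $M$ of the expansion — which is only controlled in $H^{s+1}$, not in a stronger norm — produces a genuine $M^{-s}$ loss; this must be tracked carefully, paralleling the aliasing analysis for the momentum in Section 5 of \cite{Hairer08} and Section 7.4 of \cite{Cohen08-1}.

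The main obstacle I anticipate is the bookkeeping required to show that the ``total derivative'' functions $\mathcal{J}_{\mu}[\zeta]$, $\mathcal{K}[\zeta]$, $\mathcal{H}[\zeta]$ — which by construction involve $\zeta$, $\eta$ and their derivatives up to order $N-1$ — actually reduce, modulo $\mathcal{O}(\epsilon^3)$, to the simple quadratic-plus-potential expressions $J_l$, $K$, $H_M$ evaluated at the numerical solution. Concretely, one has to (i) identify which monomials in the modulation functions survive at order $\epsilon^2$ after the $\omega^{2s+1}$ or $\mathrm{i}j$ weighting, (ii) match them term by term against the two-term truncation \eqref{jth TMFE} of $\tilde q_j$ and the corresponding truncation of $\tilde p_j$ coming from \eqref{impor-zetaeta}, and (iii) absorb everything else — the $k\neq\pm\langle j\rangle$ modes, the derivative corrections in Proposition \ref{lhd pro}, the near-resonant contributions which are anyway $\mathcal{O}(\epsilon^N)$, and the $\mathcal{O}(\epsilon^N)$ gap between $(q^n,p^n)$ and $(\tilde q,\tilde p)$ — into the stated remainders. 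Since $N\ge 1$ and, under \eqref{another-non-res cond}, the dangerous $k=\pm\langle j_1\rangle\pm\langle j_2\rangle$ modes satisfy the improved bound at the end of Section \ref{subsec: bounds}, these remainders are genuinely of size $\epsilon^3$ (respectively $\epsilon^2 M^{-s}$ for the momentum), and the claimed weighted bound $\sum_{l}\omega_l^{2s+1}\gamma_l(t_n)\le C$ follows from \eqref{bound modu func}. I would present the actions case in full detail and then indicate that the energy and momentum cases follow by the same mechanism with the modifications noted above, exactly as the structure of Section \ref{subsec:alm inv} and the cited references \cite{Cohen08,Cohen08-1,Hairer08} suggest.
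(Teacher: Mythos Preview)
Your proposal is correct and follows essentially the same approach as the paper: extract the leading $k=\pm\langle j\rangle$ contributions from the almost-invariants via the expansion of $L^k$ in Proposition~\ref{lhd pro}, express $\zeta_j^{\pm\langle j\rangle}$ in terms of $\tilde q_j,\tilde p_j$ using \eqref{jth TMFE} and \eqref{impor-zetaeta}, and then pass to $(q^n,p^n)$ via \eqref{error MFE}, bounding all remaining modes with \eqref{bound modu func}. The only cosmetic difference is that the paper writes out the momentum case $\mathcal{K}$ in detail (invoking the ``magic formulas'' of \cite[p.~508]{hairer2006} to identify the total-derivative antiderivative) and skips the other two, whereas you plan to detail the action case first; both routes are equivalent.
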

\begin{proof}We only prove the second statement of this theorem. The others can be obtained in a similar way and we
skip them for brevity.

  According to \eqref{L expansion}, \eqref{almost 2-2} and the
``magic formulas" on p. 508 of \cite{hairer2006},  it is yielded
that
\begin{equation*}
\begin{aligned}
 &\mathcal{K}[\zeta](\tau) = \sum\limits_{|j|\leq M}' j\omega_j \Big(
 |\zeta_{j}^{\langle j\rangle}|^2- |\zeta_{j}^{-\langle j\rangle}|^2\Big)+\mathcal{O}(\epsilon^3).
\end{aligned}
\end{equation*}

By \eqref{jth TMFE}, \eqref{impor-zetaeta} and the bounds of the
coefficient functions,  we get $\zeta_j^{\langle
j\rangle}=\frac{1}{2}\big(\tilde{q}_j+\frac{1}{\mathrm{i}\omega_j}\tilde{p}_j\big)+\mathcal{O}(\epsilon^2)$
and $\zeta_j^{-\langle
j\rangle}=\frac{1}{2}\big(\tilde{q}_j-\frac{1}{\mathrm{i}\omega_j}\tilde{p}_j\big)+\mathcal{O}(\epsilon^2).
$  Therefore, the scheme of $ \mathcal{K}$ becomes (see
\cite{Cohen08-1})
\begin{equation*}
\begin{aligned}
 \mathcal{K}[\zeta](\tau)
 =& \sum\limits_{|j|\leq M}'\frac{ j\omega_j }{4}\Big(
 |\tilde{q}_j+\frac{1}{\mathrm{i}\omega_j}\tilde{p}_j|^2-
 |\tilde{q}_j-\frac{1}{\mathrm{i}\omega_j}\tilde{p}_j|^2\Big)+\mathcal{O}(\epsilon^3)\\
= &  \sum\limits_{|j|\leq M}'\frac{ j\omega_j }{4}4
\frac{1}{\mathrm{i}\omega_j} \tilde{q}_{-j}\tilde{p}_j
+\mathcal{O}(\epsilon^3) \\=&
K(\tilde{q},\tilde{p})+\mathcal{O}(\epsilon^3)+\mathcal{O}(\epsilon^2
M^{-s})   \\=&
K(q^n,p^n)+\mathcal{O}(\epsilon^3)+\mathcal{O}(\epsilon^2 M^{-s}),
\end{aligned}
\end{equation*}
where the results \eqref{jth TMFE} and \eqref{error MFE} are used.
\end{proof}

\subsection{Proof of Theorem \ref{main theo1}}\label{subsec:fro to}
By the analysis given in this paper and following
\cite{Cohen08,Cohen08-1},  the statement of Theorem \ref{main theo1}
can be proved by patching together many intervals of length
$\epsilon^{-1}$.

\section{Proof of Theorem \ref{main
theo2}} \label{sec: symmetric proof} Proof of Theorem \ref{main
theo2} is very close to that of   Theorem \ref{main theo1} and we
only present the main differences in this section.

We still use the   operators defined in \eqref{new L1L2}. For the
coefficients of ERKN methods satisfying the symmetry conditions but
not the symplecticity conditions \eqref{symple cond}, the  Taylor
expansions of the operator $L^{k}$ become
\begin{equation}\label{Lll expansion}
\begin{aligned}L^{\pm\langle j\rangle}(hD)\alpha_j^{\pm\langle j\rangle}(\epsilon t)= &\pm2 \textmd{i}
\epsilon h\sin(h\omega_j/2)/\bar{b}_1(h\omega_j)
\dot{\alpha}_j^{\pm\langle j\rangle}(\epsilon t)\\ &+ \epsilon^2 h^2
 \cos(h\omega_j)\sec(h\omega_j/2)/(2\bar{b}_1(h\omega_j))\ddot{\alpha}_j^{\pm\langle
j\rangle}(\epsilon t)+\cdots,\\
 L^{k}(hD)\alpha_j^{k}(\epsilon t)=& 2\sec(h\omega_j/2)/\bar{b}_1(h\omega_j) s_{\langle j\rangle+k}s_{\langle
j\rangle-k}\alpha_j^{k}(\epsilon t)\\
&+ \textmd{i}  \epsilon
h \sec(h\omega_j/2)/\bar{b}_1(h\omega_j) s_{2k} \dot{\alpha}_j^{k}(\epsilon t)+\cdots,\\
\end{aligned}
\end{equation}
for $\abs{j}>0$ and $k\neq \pm\langle j\rangle$. A result of
modulated Fourier expansion (the same as Theorem \ref{MFE thm}) for
symmetric but not symplectic ERKN methods can also be obtained. This
result can be proved by the same way as Sects. \ref{subsec:mod
equ}-\ref{subsec:rem} with some obvious modifications brought by the
difference between \eqref{L expansion} and   \eqref{Lll expansion}.
Then three almost-invariants of the modulation system can be derived
as follows
\begin{equation*}
\begin{aligned}
&\mathcal{J}_{l}[\zeta](\tau)=  \omega_l\sigma(h \omega_l) \Big(
 |\zeta_{l}^{\langle l\rangle}|^2+ |\zeta_{l}^{-\langle
 l\rangle}|^2\Big)+\mathcal{O}(\epsilon^3),\\
 &\mathcal{K}[\zeta](\tau) = \sum\limits_{|j|\leq M}' j\omega_j\sigma(h \omega_j) \Big(
 |\zeta_{j}^{\langle j\rangle}|^2- |\zeta_{j}^{-\langle
 j\rangle}|^2\Big)+\mathcal{O}(\epsilon^3),\\
 &\mathcal{H}[\zeta](\tau) = \sum\limits_{|j|\leq M}'  \omega^2_j\sigma(h \omega_j) \Big(
 |\zeta_{j}^{\langle j\rangle}|^2+ |\zeta_{j}^{-\langle
 j\rangle}|^2\Big)+\mathcal{O}(\epsilon^3),
\end{aligned}
\end{equation*}
where $\sigma(h \omega_j)= \frac{1}{2}\textmd{sinc}\big(\frac{1}{2}h
\omega_j\big)/\bar{b}_1(h \omega_j).$ Finally Theorem \ref{main
theo2} is obtained by exploring the connections between these
almost-invariants and the modified quantities   \eqref{MMMM}.

\section{Proof of Corollary \ref{main
theo3}} \label{sec: ss proof} It is clearly that Corollary \ref{main
theo3} can be obtained immediately by Theorem \ref{main theo1} or
\ref{main theo2}. In this section, we present another proof by
establishing a relationship between symplectic and symmetric ERKN
methods and trigonometric integrators.

We first consider the following Strang splitting method
 \begin{equation*}
\begin{array}[c]{ll}
&\textmd{1. } (q_{+}^{n},p_{+}^{n}) = \Phi_{h/2,\textmd{L}}
(q^{n},p^{n}): \\
&\left(
             \begin{array}{c}
              q_{+}^{n} \\
              p_{+}^{n} \\
             \end{array}
           \right)=\left(
                                                           \begin{array}{cc}
                                                            \cos(\frac{h\Omega}{2})& \Omega^{-1}\sin(\frac{h\Omega}{2}) \\
                                                             -\Omega \sin(\frac{h\Omega}{2}) & \cos(\frac{h\Omega}{2}) \\
                                                           \end{array}
                                                         \right)\left(
             \begin{array}{c}
               q^{n} \\
               p^{n} \\
             \end{array}
           \right),\\
&\textmd{2. }(q_{-}^{n},p_{-}^{n}) = \Phi_{h,\textmd{NL}}
(q_{+}^{n},p_{+}^{n}):\\
&\left(
             \begin{array}{c}
              q_{-}^{n} \\
              p_{-}^{n} \\
             \end{array}
           \right)=\left(
             \begin{array}{c}
              q_{+}^{n} \\
              p_{+}^{n}+h\Upsilon(h \Omega)\tilde{g}(q_{-}^{n}) \\
             \end{array}
           \right),\\
&\textmd{3. }(q^{n+1},p^{n+1}) = \Phi_{h/2,\textmd{L}}
(q_{-}^{n},p_{-}^{n}): \\
&\left(
             \begin{array}{c}
              q^{n+1} \\
              p^{n+1} \\
             \end{array}
           \right)=\left(
                                                           \begin{array}{cc}
                                                            \cos(\frac{h\Omega}{2})& \Omega^{-1}\sin(\frac{h\Omega}{2}) \\
                                                             -\Omega \sin(\frac{h\Omega}{2}) & \cos(\frac{h\Omega}{2}) \\
                                                           \end{array}
                                                         \right)\left(
             \begin{array}{c}
               q_{-}^{n} \\
               p_{-}^{n}\\
             \end{array}
           \right),
\end{array}
\end{equation*}
 where
$\Upsilon$ is a function of $h \Omega$ which will be determined
below. Then a one-stage explicit symmetric ERKN method
\eqref{methods} with the symmetry condition \eqref{sym cond} is
denoted by $\Phi_h$, i.e., $(q^{n+1},p^{n+1}) = \Phi_h
(q^{n},p^{n}).$
 It can be straightly verified that  this ERKN method $\Phi_h$ can be expressed by the Strang splitting
method as
 \begin{equation}
\Phi_h=\Phi_{h/2,\textmd{L}} \circ \Phi_{h,\textmd{NL}} \circ
\Phi_{h/2,\textmd{L}}
  \label{connetciton1}%
\end{equation}
if and only if
 \begin{equation}
\frac{1}{2}\textmd{sinc}\big(\frac{1}{2}h \Omega\big)\Upsilon(h
\Omega)=\bar{b}_1(h \Omega),\ \ \cos\big(\frac{1}{2}h
\Omega\big)\Upsilon(h \Omega)=b_1(h \Omega).
  \label{connetciton10}%
\end{equation}
It is noted that these conditions hold under the symmetry conditions
\eqref{sym cond}. If   the symplecticity conditions \eqref{symple
cond} are considered further, one gets
 \begin{equation}
\bar{b}_1(h \Omega)=\frac{1}{2}\textmd{sinc}\big(\frac{1}{2}h
\Omega\big),\ \ b_1(h \Omega)=\cos\big(\frac{1}{2}h \Omega\big),\ \
\Upsilon(h \Omega)=1.
  \label{connetciton10}%
\end{equation}


 On the other hand,  we consider another Strang splitting
 \begin{equation*}
\hat{\Phi}_h=\Phi_{h/2,\textmd{NL}} \circ \Phi_{h,\textmd{L}} \circ
\Phi_{h/2,\textmd{NL}},
\end{equation*}
which yields a class of   trigonometric integrators
\begin{equation}\label{TI}
\begin{array}[c]{ll} &q^{n+1}=
\cos(h\Omega)q^{n}+h\textmd{sinc}(h\Omega)p^{n}+\frac{1}{2}h^2
 \textmd{sinc}(h\Omega)\Upsilon(h
\Omega)\tilde{g}(q^n),\\
 &p^{n+1}=-\Omega
 \sin(h\Omega)q^{n}+\cos(h\Omega)p^{n}+\frac{1}{2}h\big(
 \cos(h\Omega)\Upsilon(h \Omega)\tilde{g}(q^n)+\Upsilon(h
\Omega)\tilde{g}(q^{n+1})\big).
\end{array}
\end{equation}
It is noted that this is exactly a symmetric trigonometric
integrator (a form of (XIII.2.7)--(XIII.2.8) given on p.481 of
\cite{hairer2006}) and it is symplectic if $\Upsilon=1$.

The following important connection between symmetric ERKN methods
and symmetric trigonometric integrators has been given in
\cite{17-new}.
\begin{prop}\label{connection thm}
(See \cite{17-new}.)
 For the one-stage explicit symmetric ERKN methods and the symmetric trigonometric
integrator \eqref{TI}, the following connection is true
 \begin{equation}
\Phi_h=\Phi_{-h/2,\textmd{L}}\circ\Phi_{-h/2,\textmd{NL}}\circ\hat{\Phi}_h\circ
\Phi_{h/2,\textmd{NL}}\circ \Phi_{h/2,\textmd{L}}.
  \label{connetciton2}
\end{equation}
 Moreover, we have that
 \begin{equation}\begin{aligned}
\underbrace{\Phi_h\circ \cdots \circ\Phi_h}_{n\
\textmd{times}}=&\Phi_{-h/2,\textmd{L}}\circ\Phi_{-h/2,\textmd{NL}}\circ\big(\underbrace{\hat{\Phi}_h\circ
\cdots \circ \hat{\Phi}_h}_{n\ \textmd{times}}\big)   \circ
\Phi_{h/2,\textmd{NL}}\circ \Phi_{h/2,\textmd{L}}\\
=&\Phi_{h/2,\textmd{L}}\circ\Phi_{h/2,\textmd{NL}}\circ\big(\underbrace{\hat{\Phi}_h\circ
\cdots \circ \hat{\Phi}_h}_{n-1\ \textmd{times}}\big)   \circ
\Phi_{h/2,\textmd{NL}}\circ \Phi_{h/2,\textmd{L}}.
  \label{connetciton3}%
\end{aligned}\end{equation}
\end{prop}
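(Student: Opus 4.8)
The plan is to derive the proposition entirely from the elementary group structure of the two split propagators, with no analysis involved. Throughout I would use the factorisation $\Phi_h=\Phi_{h/2,\textmd{L}}\circ\Phi_{h,\textmd{NL}}\circ\Phi_{h/2,\textmd{L}}$ of the symmetric ERKN method, which is available from \eqref{connetciton1} under the symmetry conditions \eqref{sym cond}, together with the defining factorisation $\hat{\Phi}_h=\Phi_{h/2,\textmd{NL}}\circ\Phi_{h,\textmd{L}}\circ\Phi_{h/2,\textmd{NL}}$ of the trigonometric integrator \eqref{TI}. The first step is to record two group laws. The linear half-flow $\Phi_{t,\textmd{L}}$ is the exact propagator of the harmonic oscillator $\ddot q+\Omega^2 q=0$, so $\Phi_{s,\textmd{L}}\circ\Phi_{t,\textmd{L}}=\Phi_{s+t,\textmd{L}}$ (the angle-addition identity for the associated rotation matrices); in particular $\Phi_{-h/2,\textmd{L}}$ inverts $\Phi_{h/2,\textmd{L}}$ and $\Phi_{h/2,\textmd{L}}\circ\Phi_{h/2,\textmd{L}}=\Phi_{h,\textmd{L}}$. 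The nonlinear substep is $\Phi_{t,\textmd{NL}}\colon(q,p)\mapsto(q,\,p+t\,\Upsilon(h\Omega)\tilde g(q))$, in which the argument $h\Omega$ of $\Upsilon$ is a fixed method parameter and $q$ is left untouched; hence $\tilde g(q)$ is unchanged across the substep and $\Phi_{s,\textmd{NL}}\circ\Phi_{t,\textmd{NL}}=\Phi_{s+t,\textmd{NL}}$ as well, so $\Phi_{-h/2,\textmd{NL}}$ inverts $\Phi_{h/2,\textmd{NL}}$ and $\Phi_{h/2,\textmd{NL}}\circ\Phi_{h/2,\textmd{NL}}=\Phi_{h,\textmd{NL}}$.

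Given these laws, I would prove the one-step identity \eqref{connetciton2} by substituting the factorisation of $\hat{\Phi}_h$ into its right-hand side and simplifying from the inside out: the adjacent pair $\Phi_{-h/2,\textmd{NL}}\circ\Phi_{h/2,\textmd{NL}}$ collapses to the identity, the trailing pair $\Phi_{h/2,\textmd{NL}}\circ\Phi_{h/2,\textmd{NL}}$ merges into $\Phi_{h,\textmd{NL}}$, and $\Phi_{-h/2,\textmd{L}}\circ\Phi_{h,\textmd{L}}$ becomes $\Phi_{h/2,\textmd{L}}$. What remains is exactly $\Phi_{h/2,\textmd{L}}\circ\Phi_{h,\textmd{NL}}\circ\Phi_{h/2,\textmd{L}}$, which equals $\Phi_h$ by \eqref{connetciton1}.

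For the $n$-step identities \eqref{connetciton3}, I would raise \eqref{connetciton2} to the $n$-th power. Between consecutive factors the ``seam'' $\Phi_{h/2,\textmd{NL}}\circ\Phi_{h/2,\textmd{L}}\circ\Phi_{-h/2,\textmd{L}}\circ\Phi_{-h/2,\textmd{NL}}$ telescopes to the identity by the two group laws, so all $n-1$ interior seams cancel and one is left with $\hat{\Phi}_h^{\,n}$ flanked by the outer conjugating maps $\Phi_{h/2,\textmd{NL}}\circ\Phi_{h/2,\textmd{L}}$ on the right and $\Phi_{-h/2,\textmd{L}}\circ\Phi_{-h/2,\textmd{NL}}$ on the left; this is the first equality. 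The second equality follows by peeling one copy of $\hat{\Phi}_h$ off the front of the block $\hat{\Phi}_h^{\,n}$ and merging it with the left wrapper, using $\Phi_{-h/2,\textmd{L}}\circ\Phi_{-h/2,\textmd{NL}}\circ\hat{\Phi}_h=\Phi_{h/2,\textmd{L}}\circ\Phi_{h/2,\textmd{NL}}$ (again by the group laws together with the factorisation of $\hat{\Phi}_h$), which reduces the number of interior factors to $n-1$.

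There is no genuine analytic obstacle here; the one point that deserves care is the claim in the first step that the nonlinear substep forms a one-parameter group in its step-length argument. This is a special feature of this particular splitting rather than a general property of splitting methods: it holds precisely because $\Phi_{\cdot,\textmd{NL}}$ does not change $q$, so the force $\tilde g(q)$ is evaluated at the same point before and after the substep, and this is what makes the cancellations above exact rather than merely approximate. Once this and the factorisation \eqref{connetciton1} are in place, the proof is a pure rearrangement of compositions.
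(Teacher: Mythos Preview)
Your argument is correct. The paper does not actually prove this proposition; it is quoted from \cite{17-new} and used without proof in Section~\ref{sec: ss proof}. Your approach---reducing everything to the one-parameter group laws $\Phi_{s,\textmd{L}}\circ\Phi_{t,\textmd{L}}=\Phi_{s+t,\textmd{L}}$ and $\Phi_{s,\textmd{NL}}\circ\Phi_{t,\textmd{NL}}=\Phi_{s+t,\textmd{NL}}$ together with the two Strang factorisations---is the natural one, and your telescoping argument for the $n$-step identity is exactly how such conjugation relations are iterated. Your explicit remark that the nonlinear substep forms a group only because $q$ is unchanged (so $\tilde g(q)$ is evaluated at the same point) and because the filter argument $h\Omega$ in $\Upsilon$ is a fixed method parameter independent of the substep length is the one subtlety worth recording; it is precisely what makes both $\Phi_{-h/2,\textmd{NL}}\circ\Phi_{h/2,\textmd{NL}}=\mathrm{id}$ and $\Phi_{h/2,\textmd{NL}}\circ\Phi_{h/2,\textmd{NL}}=\Phi_{h,\textmd{NL}}$ exact.
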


 Since the long-time behaviour of  symplectic and symmetric trigonometric
integrators  is well studied for wave equations  in
\cite{Cohen08-1}, the same long term behaviour of one-stage explicit
symplectic and symmetric ERKN methods can be derived  by using the
relation \eqref{connetciton3} and by verifying the following two
points.
\begin{itemize}
\item I. When   \eqref{initi cond} holds, the   initial value  $$(\tilde{q}^0,\tilde{p}^0 ):=\Phi_{h/2,\textmd{NL}}\circ \Phi_{h/2,\textmd{L}}(q^0,p^0 )$$ for $\big(\underbrace{\hat{\Phi}_h\circ
\cdots \circ \hat{\Phi}_h}_{n-1\ \textmd{times}}\big)$ also
satisfies the   condition
\begin{equation}
\big(\norm{\tilde{q}^0}_{s+1}^2+\norm{\tilde{p}^0}_{s}^2\big)^{1/2}\leq C\epsilon. \label{new IV energy condition}%
\end{equation}

\item II. The two additional steps with $\Phi_{h/2,\textmd{L}}$ and $\Phi_{h/2,\textmd{NL}}$  only introduce an $\mathcal{O}(\epsilon) $ deviation in the
actions, momentum and  energy, provided that the corresponding
initial values $(\tilde{q},\tilde{p})$ are bounded by $\epsilon$.

\end{itemize}

 For the point I, in the light of  the splitting method, one has   that
\begin{equation*}
\begin{aligned} &\tilde{q}^0=\cos\big(\frac{h\Omega}{2}\big)q^0+\Omega^{-1}\sin\big(\frac{h\Omega}{2}\big)p^0,\\
&\tilde{p}^0=-\Omega \sin\big(\frac{h\Omega}{2}\big)
q^0+\cos\big(\frac{h\Omega}{2}\big)p^0+\frac{1}{2}h\Upsilon(\frac{h\Omega}{2})\tilde{g}(\tilde{q}^0)
.
\end{aligned}
\end{equation*}
Thus we get $\norm{\tilde{q}^0}_{s+1}^2\leq
\norm{q^0}_{s+1}^2+\norm{p^0}_{s}^2\leq  \epsilon^2$ and
\begin{equation*}
\begin{aligned}
&\norm{\tilde{q}^0}_{s+1}^2+\norm{\tilde{p}^0}_{s}^2\leq
2(\norm{q^0}_{s+1}^2+\norm{p^0}_{s}^2)+\frac{h^2}{4}
\norm{\tilde{g}(\tilde{q}^0)}_{s}^2\leq 2\epsilon^2+\frac{h^2}{4}
\norm{\tilde{g}(\tilde{q}^0)}_{s}^2.
\end{aligned}
\end{equation*}
Therefore \eqref{new IV energy condition} is clear  by considering
   the Lipschitz bound of $\tilde{g}$ (Sect. 4.2 in \cite{Hairer08} and
Sect. 6.10 in \cite{Cohen08-1}).

   For the second point,   according to
\begin{equation*}
\begin{aligned} \Phi_{h/2,\textmd{L}}\circ\Phi_{h/2,\textmd{NL}}
(\tilde{q},\tilde{p})=&    \left(  \begin{array}{cc}
                                                            \cos(\frac{h\Omega}{2})& \Omega^{-1}\sin(\frac{h\Omega}{2}) \\
                                                             -\Omega \sin(\frac{h\Omega}{2}) & \cos(\frac{h\Omega}{2}) \\
                                                           \end{array}
 \right)  \left(
             \begin{array}{c}
              \tilde{q} \\
              \tilde{p}+\frac{h}{2}\Upsilon(\frac{h \Omega}{2})\tilde{g}(\tilde{q}) \\
             \end{array}
           \right)\\
=&\left(
             \begin{array}{c}
             \cos(\frac{h\Omega}{2}) \tilde{q}+\Omega^{-1}\sin(\frac{h\Omega}{2})\tilde{p}+\frac{h^2}{4} \textmd{sinc}(\frac{h\Omega}{2})\Upsilon(\frac{h \Omega}{2})\tilde{g}(\tilde{q}) \\
              -\Omega \sin(\frac{h\Omega}{2}) \tilde{q}+ \cos(\frac{h\Omega}{2})\tilde{p}+\frac{h}{2} \cos(\frac{h\Omega}{2})\Upsilon(\frac{h \Omega}{2})\tilde{g}(\tilde{q}) \\
             \end{array}
           \right),
\end{aligned}
\end{equation*}
it can be checked that this method is of order two, which leads to
this point.

The proof of Corollary \ref{main theo3} is complete.

\section{Conclusions} \label{sec:conclusions}
In this work, we analysed the  long-time conservation properties of
 one-stage symplectic or symmetric
  ERKN methods when applied to nonlinear  wave equations via
spatial spectral semi-discretizations. We derived two main results
concerning the long-time near conservations of actions, momentum and
energy for symplectic or symmetric ERKN methods in the full
discretisation of nonlinear wave equations.  The technique of
multi-frequency
 modulated Fourier expansions
was used to prove the  main results by giving a modulated Fourier
expansion of symplectic or symmetric ERKN methods and showing three
almost-invariants of the modulation system. A relationship between
symplectic  and symmetric   ERKN methods and trigonometric
integrators was established to give another proof for the long-term
behaviour of symplectic and symmetric ERKN methods.

\section*{Acknowledgements}
The first author is grateful to Christian Lubich for the useful
comments on the topic of modulated Fourier expansions. The first
author is also very grateful to Ludwig  Gauckler for showing the
connection given in Section \ref{sec: ss proof}.

\end{document}